\documentclass[11pt]{article}

\usepackage[margin=1.08in]{geometry}
\usepackage{amsmath,amssymb,amsthm}
\usepackage{xcolor}
\usepackage[
  colorlinks=true,
  linkcolor=blue!60!black,
  citecolor=blue!60!black,
  urlcolor=blue!60!black
]{hyperref}
\hypersetup{
  pdftitle={Characteristic drops for high-order vanishing on the hypercube},
  pdfauthor={Dean Menezes}
}

\newtheorem{theorem}{Theorem}[section]
\newtheorem{proposition}[theorem]{Proposition}
\newtheorem{lemma}[theorem]{Lemma}
\newtheorem{corollary}[theorem]{Corollary}
\theoremstyle{definition}
\newtheorem{definition}[theorem]{Definition}

\DeclareMathOperator{\ord}{ord}
\newcommand{\Z}{\mathbb Z}
\newcommand{\zero}{\mathbf 0}
\newcommand{\cube}{\{0,1\}^n}
\newcommand{\pcube}{\cube\setminus\{\zero\}}
\newcommand{\ghat}{\widehat g}

\title{Characteristic drops for high-order vanishing on the hypercube}
\author{Dean Menezes\thanks{University of Texas at Austin,
Austin, TX 78712, USA\@.  E-mail: \texttt{dean.menezes@utexas.edu}.}}
\date{}

\begin{document}
\maketitle

\begin{abstract}
Let $F$ be a field, let $k\ge2$, let $0\le \ell\le k-1$, and suppose that
$n\ge k-1$.  We determine the minimum degree of a polynomial in
$F[x_1,\ldots,x_n]$ that vanishes to order at least $k$ at every nonzero
vertex of the Boolean cube and to order exactly $\ell$ at the origin.  The
answer is
\[
 n+2k-2-\rho_F(k-\ell),
\]
where $\rho_F(s)$ is the least $r$ for which
$s-1=a_1+\cdots+a_r$ and every $C_{a_i-1}$ is nonzero in $F$; the empty
sum is allowed when $s=1$.

Our construction is integral: it gives a $\mathbb Z$-basis of the reduced
vanishing lattice and an integral diagonal presentation of its top-degree
map, with Catalan numbers on the diagonal.  This presentation persists under
arbitrary base change.  Block coordinates refine the diagonalization and make
the basis compatible with polynomial degree.  In odd characteristic every
drop with $\ell\le k-2$ is either zero or one.  In
characteristic $2$ the answer is
\[
 n+2k-2-s_2(k-\ell-1),
\]
where $s_2$ is binary digit sum.  Thus all characteristic drops in the stable
range are determined exactly.
\end{abstract}

\medskip
\noindent\textit{2020 Mathematics Subject Classification.}
Primary 05D40; Secondary 05A10, 11B65.

\smallskip
\noindent\textit{Keywords.}
Polynomial method, Boolean cube, vanishing multiplicity, Catalan numbers,
positive characteristic.

\section{Introduction}\label{sec:intro}

For a polynomial $P$, write $[P]_j$ for its homogeneous part of total
degree $j$.  If $P\in F[x_1,\ldots,x_n]$ and $a\in F^n$, let $\ord_aP$ be
the least $j$ for which $[P(x+a)]_j\ne0$, and set $\ord_a0=\infty$.  We also
write
\[
 \operatorname{in}_aP=[P(x+a)]_{\ord_aP}
\]
when $P\ne0$.  Thus $P$ vanishes to order at least $k$ at $a$ precisely when
$\ord_aP\ge k$.
For $0\le\ell\le k-1$, define
\[
 \delta_F(n;k,\ell)=
 \min\bigl\{\deg P:\ord_aP\ge k\ (a\in\pcube),\ 
                       \ord_{\zero}P=\ell\bigr\}.
\]

Sauermann and Wigderson proved over $\mathbb R$ that
\[
 \delta_{\mathbb R}(n;k,\ell)=n+2k-3
 \qquad(0\le\ell\le k-2,\, n\ge2k-3),
\]
and that $\delta_{\mathbb R}(n;k,k-1)=n+2k-2$ \cite{SW}.
Huang, Wang, and Wang subsequently improved the first range to $n\ge k-1$
as the Boolean case of a theorem for $\{0,1,\ldots,m\}^n$
\cite[Theorem~1.7]{HuangWangWang}.  The proof in \cite{SW} features the
Catalan numbers
\[
 C_j=\frac{1}{j+1}\binom{2j}{j}.
\]
Its distinguished top coordinate is $C_{k-2}$, so the top-degree argument
can fail in positive characteristic.

To describe all characteristic drops, let $s\ge1$ and define
\begin{equation}\label{eq:rho-def}
 \rho_F(s)=\min\left\{r\ge0:
 \begin{array}{c}
  s-1=a_1+\cdots+a_r,\quad a_i\ge1,\\[2pt]
  C_{a_i-1}\ne0\text{ in }F\quad(1\le i\le r)
 \end{array}\right\}.
\end{equation}
Here the empty sum is allowed, so $\rho_F(1)=0$.  For $s\ge2$ the set is
nonempty because $C_0=1$, and hence $1\le\rho_F(s)\le s-1$.
We call $\rho_F(s)$ the \emph{Catalan length} of $s-1$ over $F$.
In characteristic $p$ we also write $\rho_p$.  In characteristic $0$,
$\rho_F(s)=1$ for $s\ge2$.

Our main result is the following exact formula.

\begin{theorem}\label{thm:extremal}
Let $F$ be a field, let $k\ge2$, let $n\ge k-1$, and let
$0\le\ell\le k-1$.  Then
\begin{equation}\label{eq:main-formula}
 \boxed{\ \delta_F(n;k,\ell)=n+2k-2-\rho_F(k-\ell).\ }
\end{equation}
For the boundary value $\ell=k-1$, the formula holds already for $n\ge1$.
\end{theorem}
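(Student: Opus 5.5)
The proof splits into two parts: a general upper bound $n+2k-2$ valid in every field, and a top-degree analysis that decides how far below $n+2k-2$ one can go.

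\textbf{Reduction to a lattice.} Any admissible $P$ can be reduced modulo the ideal of polynomials vanishing to order $\ge k$ on all of $\{0,1\}^n$ without increasing degree. That ideal is generated by the products $\prod_{i\in S}(x_i^2-x_i)^{m_i}$ with $\sum m_i\ge k$. The reduced problem is then governed by a finite "reduced vanishing lattice" $L\subset\Z[x_1,\dots,x_n]$. It consists of integral polynomials that vanish to order $\ge k$ on $\pcube$ and have prescribed low-order behavior at $\zero$, and it should be stable under base change to any $F$.

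\textbf{Integral basis and the drops.}
\begin{enumerate}
\item Construct an explicit $\Z$-basis of $L$ using the Sauermann--Wigderson building blocks: symmetrized products of $(x_i^2-x_i)$ and elementary symmetric-type polynomials.
\item Compute the top-degree map $L\to$ (degree $n+2k-2-j$ forms), indexed by the drop $j$. The claim to establish is that in a suitable basis this map is diagonal, with entries that are products of Catalan numbers $C_{a_1-1}\cdots C_{a_r-1}$.
\item Read off the constraint on each composition: a composition $s-1=a_1+\dots+a_r$, with $s=k-\ell$, corresponds to a polynomial of degree $n+2k-2-r$ whose initial form at $\zero$ has degree exactly $\ell$.
\item For the $r=1$ case, use the one-variable identity behind $C_{k-2}$ in \cite{SW}. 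Iterating it along "block coordinates" (disjoint groups of variables) should produce the products. Roughly, one concatenates $r$ single-drop gadgets in disjoint blocks, and each gadget costs its own factor $C_{a_i-1}$.
\end{enumerate}

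\textbf{Upper bound.} Over any $F$, pick a composition achieving $\rho_F(s)$ with all $C_{a_i-1}\ne0$ in $F$. Take the corresponding basis element and reduce it mod $p$. Its leading coefficient is a unit, so it keeps degree exactly $n+2k-2-\rho_F(s)$. I also need to check that its order at $\zero$ is exactly $\ell$. This holds because the initial form at $\zero$ is integral with a coefficient equal to $1$, by construction of the basis. The condition $n\ge k-1$ is used to guarantee that enough variables exist for the blocks. For $\ell=k-1$ we have $s=1$ and $r=0$, and a trivial construction, e.g. $x_1^{k-1}$ times a polynomial vanishing on the other cube points, works for all $n\ge1$.

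\textbf{Lower bound (the main obstacle).} Suppose $\deg P = n+2k-2-r$ with $r<\rho_F(s)$.
\begin{enumerate}
\item Expand $P$ in the base-changed basis.
\item Use the diagonal integral presentation, which persists over $F$, to show that the coefficients of all basis elements of drop $\le r$ must be compatible with nonvanishing top forms. By minimality of $\rho_F(s)$, each such element has a vanishing Catalan product, so its degree-$(n+2k-2-j)$ top form is not independent of the top forms of other elements.
\item Conclude that the order-$\ell$ component at $\zero$ is forced to vanish.
\end{enumerate}
Making this rigorous requires showing that the presentation is genuinely diagonal: it must be triangular with respect to degree, so that cancellation among top forms cannot hide a low-drop element. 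That is exactly the compatibility of the block-coordinate basis with polynomial degree.

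I would first prove that diagonality over $\Z$ via a Smith normal form computation of the top-degree map. I would then invoke that Smith forms commute with base change, which makes the rank over $F$ equal to the number of diagonal entries nonzero in $F$.
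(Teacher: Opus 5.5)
Your proposal has a genuine gap in the lower bound, and the tool you propose for it cannot close it. Note first that the setup is backwards: you must exclude $\deg P=n+2k-2-r$ with $r>\rho_F(s)$, not $r<\rho_F(s)$. More seriously, a Smith normal form of the integral top-degree map $\varphi_k^{\Z}$ only sees the layer of degree $n+2k-3$. In the paper's basis that map is diagonal with single Catalan numbers $C_{k-|d|-2}$ on the diagonal, not products. Base change then tells you exactly which combinations lose that one layer, and nothing about drops of two or more. Indeed $\varphi_k^{\Z}$ is injective, so its kernel over $F$ does not come from an integral kernel. The lower layers are maps on that kernel, and the top Smith form does not encode them.

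What you need is a basis that is echelon with respect to degree at every layer, plus an argument that leading parts of different basis elements cannot cancel. The paper writes $G_{k,d}=y^d\ghat_{k-|d|}$ in the unitriangular block basis $\prod_i (x_i(x_i-1))^{q_i}(x_i-1)^{\varepsilon_i}$. Every block of $G_{k,d}$ has coefficient $\pm\prod C_{a_i-1}$ and degree $n+2k-2-(r+2(s-1-A))$, which gives $\deg G_{k,d}=n+2k-2-\rho_F(k-|d|)$. For $P=\sum c_dG_{k,d}$, choose $d$ with $c_d\ne0$ minimizing $\rho_F(k-|d|)$, and then maximizing $|d|$. The pivot block $y^d\prod_\nu y_{i_\nu}^{a_\nu}\prod_{j\notin I}(x_j-1)$, with $I$ disjoint from $\operatorname{supp}d$ (this is where $n\ge k-1$ enters), receives no contribution from any other $G_{k,e}$ with $c_e\ne0$. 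Combined with $\operatorname{in}_{\zero}G_{k,d}=\pm x^d$, this gives the lower bound. Nothing in your plan plays the role of this pivot argument.

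The construction side is also not in place. Multiplying single-drop gadgets on disjoint groups of variables cannot work, because orders add. At a nonzero point supported in one group, that factor must by itself vanish to order $k-\ell$ above its own origin order. So each factor absorbs the whole gap $s=k-\ell$, the parts $a_i$ never split $s-1$, and the degrees simply add.

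Your step 3 is false as stated over $\Z$. For $r\ge2$, no rational polynomial of degree $n+2k-2-r$ has the required vanishing and $\ord_{\zero}=\ell\le k-2$, since the characteristic-zero minimum is $n+2k-3$. The products $\prod C_{a_i-1}$ arise instead as coefficients of lower layers of a single polynomial. That polynomial is the truncation $\ghat_s$ of $\prod_iN_s(x_i)$ to total $y$-weight at most $s-1$, and its degree over $\Z$ is $n+2s-3$.

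Hence, for the upper bound, ``the leading coefficient is a unit'' is not enough. You must show that every block of cost below $\rho_F(s)$ vanishes in $F$. This uses the explicit coefficients of all blocks together with $\rho_F(s)\le r+(s-1-A)$.

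Finally, for $\ell=k-1$ you give only a construction. The lower bound $n+2k-2$ needs two further ingredients. The first is the stronger reduction: for $\deg Q\le n+2k-3$, subtract only polynomials of order at least $k$ at every cube point, origin included. The second is the injectivity of the Hasse interpolation map on $U_k$.
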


The structural result behind Theorem~\ref{thm:extremal} is more precise.
Sauermann and Wigderson define a finite-dimensional space $V_k$ of reduced
polynomials that vanish to order $k$ on $\pcube$, and a map $\varphi_k$ that
retains the homogeneous part of degree $n+2k-3$.  We construct polynomials
\[
 G_{k,d}=\left(\prod_{i=1}^n(x_i(x_i-1))^{d_i}\right)\ghat_{k-|d|},
 \qquad d\in\Z_{\ge0}^n,\quad |d|\le k-2,
\]
where $\ghat_s$ is an explicit integral Catalan polynomial.  They form a
$\Z$-basis of the integral reduced vanishing lattice.  If $n\ge k-1$, then
\begin{equation}\label{eq:intro-diagonal}
 \varphi_k(G_{k,d})=C_{k-|d|-2}\,w_{k,d}
\end{equation}
for the standard basis $\{w_{k,d}\}$ of the top-degree space $W_k$.
Consequently the integral top map has the diagonal presentation
\begin{equation}\label{eq:intro-cokernel}
 \operatorname{coker}\varphi_k^{\Z}\cong
 \bigoplus_{t=0}^{k-2}
 \left(\Z/C_{k-t-2}\Z\right)^{\binom{n+t-1}{n-1}}.
\end{equation}
After base change to $F$, this presentation gives, in particular,
\begin{equation}\label{eq:intro-kernel}
 \ker\varphi_k=
 \operatorname{span}\{G_{k,d}:C_{k-|d|-2}=0\text{ in }F\}.
\end{equation}

The diagonal presentation detects only the top layer.  A block-echelon
argument controls every layer: for each nonzero expansion
$P=\sum_dc_dG_{k,d}$,
\begin{equation}\label{eq:intro-degree}
 \deg P=
 \max_{c_d\ne0}\bigl(n+2k-2-\rho_F(k-|d|)\bigr).
\end{equation}
Moreover, $\operatorname{in}_{\zero}G_{k,d}$ is a nonzero scalar multiple of
$x^d$.  Hence $\ord_{\zero}P$ is the least $|d|$ with $c_d\ne0$, and
\eqref{eq:main-formula} follows.

The Catalan length can be evaluated explicitly.  If $F$ has odd characteristic $p$,
then every positive integer is a sum of at most two Catalan-admissible parts.
Consequently:

\begin{corollary}[Odd characteristic]\label{cor:odd-intro}
Let $F$ have odd characteristic $p$, and assume the hypotheses of
Theorem~\ref{thm:extremal}.  The boundary value is
\[
 \delta_F(n;k,k-1)=n+2k-2.
\]
For $0\le\ell\le k-2$,
\[
 \delta_F(n;k,\ell)=
 \begin{cases}
  n+2k-3,&p\nmid C_{k-\ell-2},\\
  n+2k-4,&p\mid C_{k-\ell-2}.
 \end{cases}
\]
Thus the minimum either stays at its characteristic-zero value or drops by
exactly one degree.
\end{corollary}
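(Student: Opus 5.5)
The plan is to derive the corollary directly from Theorem~\ref{thm:extremal}, which reduces everything to evaluating $\rho_F(k-\ell)$ in odd characteristic $p$. Put $s=k-\ell$. For the boundary case $\ell=k-1$ we have $s=1$, hence $\rho_F(1)=0$ by the convention that the empty sum is allowed, and Theorem~\ref{thm:extremal} gives $\delta_F(n;k,k-1)=n+2k-2$ immediately. For $0\le\ell\le k-2$ we have $s\ge2$, so $\rho_F(s)\ge1$. Moreover $\rho_F(s)=1$ exactly when the one-part decomposition $s-1=a_1$ is admissible, that is, when $C_{s-2}=C_{k-\ell-2}\ne0$ in $F$. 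This is the first case of the corollary, with value $n+2k-3$. In the complementary case $p\mid C_{k-\ell-2}$ we get $\rho_F(s)\ge2$, and it remains to prove $\rho_F(s)\le2$: every integer $m=s-1\ge1$ should be expressible as $a+b$ with $a,b\ge1$ and $p\nmid C_{a-1}C_{b-1}$. Theorem~\ref{thm:extremal} then yields $n+2k-4$.

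The key step, and the main obstacle, is the two-part decomposition lemma. My approach is to use Kummer's theorem. The identity $C_j=\binom{2j}{j}-\binom{2j}{j+1}$ is not convenient, so I would instead use the cleaner criterion: for odd $p$, the $p$-adic valuation of $C_j=\binom{2j+1}{j}/(2j+1)$ is the number of carries when adding $j+(j+1)$ in base $p$, minus $v_p(2j+1)$. The known characterization (Deutsch--Sagan, or Alter--Kubota) is that $p\nmid C_j$ iff the base-$p$ digits of $j+1$ satisfy the following: if we write $j+1$ in base $p$, all digits except possibly the last are at most $(p-1)/2$, and the last digit is at most $(p+1)/2$. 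I would either cite this or rederive it quickly from Kummer's theorem applied to $\binom{2j+2}{j+1}/(j+2)$-type expressions. With $A=\{j+1: p\nmid C_j\}$, the lemma becomes: every $m\ge1$ is a sum of two elements of $A$, or lies in $A$ itself. The last alternative is already covered, because $\rho\le1\le2$.

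To prove this additive statement, I would split $m$ digitwise in base $p$. Let $m=\sum m_ip^i$ with $0\le m_i\le p-1$. Each digit $m_i\le p-1$ can be written as $u_i+v_i$ with $u_i,v_i\le (p-1)/2$. Setting $a=\sum u_ip^i$ and $b=\sum v_ip^i$ gives $a+b=m$ with both $a$ and $b$ satisfying the digit condition. The only issue is that we need $a,b\ge1$. If one of them vanishes, then $m$ has all digits at most $(p-1)/2$, so $m\in A$ already. Alternatively, when $m\ge2$, I can shift a unit between the lowest nonzero digits, using the extra slack in the last-digit bound $(p+1)/2$. This completes $\rho_F(s)\le2$, and substituting into \eqref{eq:main-formula} gives both stated values and the ``drop by at most one'' conclusion. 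The delicate point I expect to check carefully is that the exact digit criterion, including the special allowance on the last digit, is stated correctly. The decomposition only needs the weaker sufficient condition that all digits of $j+1$ are at most $(p-1)/2$, and I would verify that this implies $p\nmid C_j$ via Kummer's theorem: there are no carries in $(j+1)+(j+1)$, so $p\nmid\binom{2j+2}{j+1}$, and $C_j=\binom{2j+2}{j+1}/(2(2j+1))$ with $p\nmid 2j+1$, because $2j+1=2(j+1)-1$ has last digit $2u_0-1\not\equiv0$ when $1\le u_0\le(p-1)/2$. The case $u_0=0$ needs a small separate check, which can be avoided by choosing the split so that $u_0,v_0\ge1$ whenever $m_0\ge2$.
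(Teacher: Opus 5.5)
Your proposal is correct and follows the paper's route: Theorem~\ref{thm:extremal} reduces everything to $\rho_p(k-\ell)$, and $\rho_p(s)\le 2$ comes from splitting the base-$p$ digits of $s-1$ via a Kummer-type criterion, with positivity of both summands forced by a digit exceeding $(p-1)/2$, as in Proposition~\ref{prop:rho-odd}. The only difference is that you split every digit, including the units digit, into parts at most $(p-1)/2$, so you need only the sufficient condition from Kummer's theorem applied to $a+a$ together with $\binom{2a}{a}=2(2a-1)C_{a-1}$, not the paper's units-digit allowance $(p+1)/2$; since $C_{a-1}$ divides $\binom{2a}{a}$, your separate check that $p\nmid 2a-1$ (including the case $u_0=0$) and the unit-shifting alternative are unnecessary.
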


In characteristic $2$, the admissible parts are the powers of $2$.

\begin{corollary}[Characteristic two]\label{cor:two-intro}
Let $F$ have characteristic $2$, and assume the hypotheses of
Theorem~\ref{thm:extremal}.  Then
\[
 \delta_F(n;k,\ell)=n+2k-2-s_2(k-\ell-1),
\]
where $s_2(m)$ is the number of $1$'s in the binary expansion of $m$.
\end{corollary}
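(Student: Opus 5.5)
The plan is to specialize Theorem~\ref{thm:extremal} to characteristic $2$. With $s=k-\ell\ge1$, it suffices to prove
\[
 \rho_2(s)=s_2(s-1)\qquad(s\ge1).
\]
Substituting this into \eqref{eq:main-formula} gives the corollary. The work splits into two steps: identify the Catalan-admissible parts, then solve the resulting minimal-length problem.

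\emph{Step 1: admissible parts are powers of $2$.} I will show that $C_{a-1}$ is odd if and only if $a$ is a power of $2$, via the $2$-adic valuation of $C_j$.

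\begin{itemize}
\item By Kummer's theorem, $v_2\binom{2j}{j}$ equals the number of carries when adding $j+j$ in base $2$. Each $1$-bit of $j$ produces exactly one carry, so $v_2\binom{2j}{j}=s_2(j)$.
\item Since $C_j=\binom{2j}{j}/(j+1)$, we get $v_2(C_j)=s_2(j)-v_2(j+1)$.
\item Adding $1$ to $j$ turns the trailing block of $v_2(j+1)$ ones into zeros and one zero into a one. Hence $s_2(j)=s_2(j+1)+v_2(j+1)-1$.
\item Combining the last two facts, $v_2(C_j)=s_2(j+1)-1$.
\end{itemize}

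Taking $j=a-1$, $C_{a-1}$ is nonzero in $F$ exactly when $s_2(a)=1$, that is, when $a$ is a power of $2$. This carry bookkeeping is the only genuinely number-theoretic point, and I expect it to be the main step, though it is standard.

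\emph{Step 2: minimal length.} By \eqref{eq:rho-def}, $\rho_2(s)$ is the least number of powers of $2$ summing to $m=s-1$, with the empty sum for $m=0$.

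\begin{itemize}
\item Upper bound: the binary expansion of $m$ uses $s_2(m)$ distinct powers of $2$, so $\rho_2(s)\le s_2(m)$.
\item Lower bound: $s_2$ is subadditive, $s_2(a+b)\le s_2(a)+s_2(b)$, because each carry lowers the digit count by exactly one. So any representation $m=a_1+\cdots+a_r$ with each $a_i$ a power of $2$ satisfies $s_2(m)\le\sum_i s_2(a_i)=r$.
\end{itemize}

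Therefore $\rho_2(s)=s_2(s-1)$. This also covers $s=1$, where both sides are $0$. Hence $\delta_F(n;k,\ell)=n+2k-2-s_2(k-\ell-1)$ under the hypotheses of Theorem~\ref{thm:extremal}.
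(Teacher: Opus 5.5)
Your proof is correct and follows the paper's argument. You reduce to $\rho_2(s)=s_2(s-1)$, show $v_2(C_j)=s_2(j+1)-1$ so that the admissible parts are exactly the powers of $2$, and conclude via the binary expansion together with subadditivity of $s_2$; the only difference is that you get $v_2\binom{2j}{j}=s_2(j)$ from Kummer's carry count, whereas the paper reads the same identity off Legendre's formula.
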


For $\ell=0$, these statements settle the original punctured-cube problem
in every characteristic and at every multiplicity.  For a prime $p$, write
\[
 k_0(p)=\min\{k\ge2:p\mid C_{k-2}\}.
\]
The first failures are the values highlighted in \cite[\S4.2]{SW}:
\[
 k_0(2)=4,\qquad k_0(3)=7,\qquad
 k_0(p)=\frac{p+5}{2}\quad(p\ge5).
\]
More generally, if $k=k_0(p)+\ell$, then
\[
 \delta_F(n;k,\ell)=n+2k-4.
\]

\paragraph{Relation to earlier work.}
The order-one case is the Alon--F\"uredi theorem \cite{AlonFuredi}.
Related polynomial and covering results include Ball and Serra's punctured
Nullstellensatz \cite{BallSerra}, the almost-$k$-cover theorem of Clifton
and Huang \cite{CliftonHuang}, and the layer-cover theorem of Ghosh, Kayal,
and Nandi \cite{GhoshKayalNandi}.  Multiplicity subspace covers over
$\mathbb F_2$ were studied by Bishnoi, Boyadzhiyska, Das, and M\'esz\'aros
\cite{BishnoiEtAl}, and multiplicity covers of planar grids by Bishnoi,
Boyadzhiyska, Das, and den Bakker \cite{CoveringGrids}.  Ghosh, Kayal, and
Nandi later treated larger exceptional subsets of the cube
\cite{AlmostCovering}.

We use the reduction, dimension count, and top-degree space introduced by
Sauermann and Wigderson \cite{SW}.  Their construction supplies one vector
whose distinguished top coordinate is a Catalan number.  The Catalan truncation
below supplies all vectors $G_{k,d}$ at once, gives an integral basis and a
diagonal presentation, and controls every lower leading layer.  This turns
the first-failure calculation into the general formula
\eqref{eq:main-formula}.  We are unaware of an earlier integral presentation
or degree-compatible basis for this reduced vanishing space.

The range $n\ge k-1$ in characteristic $0$ is also the Boolean case of
\cite[Theorem~1.7]{HuangWangWang}, which treats
$\{0,1,\ldots,m\}^n$ over $\mathbb R$ and proves that the analogous top map
is injective.  That argument does not track modular rank loss; the integral
diagonal presentation here identifies it exactly.  Other nearby
work concerns positive-characteristic Zariski closure of symmetric subsets
\cite{SrinivasanVenkitesh} and higher-multiplicity polynomial and
hyperplane covers of symmetric subsets \cite{GKNV}.  These results do not
contain the arbitrary-characteristic punctured-cube formula proved here.

Carde \cite{Carde} and Ekhad--Zeilberger \cite{EkhadZeilberger} gave short
proofs of Catalan identities arising in \cite{SW}.  The divisibility of
Catalan numbers by primes and prime powers was determined by Alter and
Kubota \cite{AlterKubota}; see also Deutsch--Sagan \cite{DeutschSagan}
for congruence results and the binary valuation used below.

Section~\ref{sec:framework} records the characteristic-free linear-algebra
framework.  Section~\ref{sec:catalan} constructs $\ghat_s$.
Section~\ref{sec:basis} gives the explicit basis and diagonalization, while
Section~\ref{sec:degree} proves the block-echelon degree formula.
Theorem~\ref{thm:extremal} is proved in Section~\ref{sec:extremal}; the
arithmetic consequences follow in Section~\ref{sec:arithmetic}.

\section{The reduced vanishing space}\label{sec:framework}

Throughout this section, $k\ge2$.  Put
\[
 y_i=x_i(x_i-1)\qquad(1\le i\le n).
\]
For a multi-index $e$, write $|e|=e_1+\cdots+e_n$.

\begin{definition}
A polynomial $P\in F[x_1,\ldots,x_n]$ is \emph{$k$-reduced} if
\[
 \deg P\le n+2k-3
 \quad\text{and}\quad
 \sum_{i=1}^n\left\lfloor\frac{e_i}{2}\right\rfloor\le k-1
\]
for every monomial $x^e$ of $P$.  Let $U_k$ be the space of $k$-reduced
polynomials, and let
\[
 V_k=\{P\in U_k:\ord_aP\ge k\text{ for every }a\in\pcube\}.
\]
Let $U_k^{\Z}$ be the group of integral $k$-reduced polynomials, and let
\[
 V_k^{\Z}=\{P\in U_k^{\Z}:\ord_aP\ge k\text{ for every }a\in\pcube\}.
\]
\end{definition}

The following is the reduction in \cite[Claim~2.1]{SW}, with its stronger
low-degree conclusion included.

\begin{lemma}[Reduction]\label{lem:reduction}
For every $Q\in F[x_1,\ldots,x_n]$, there is a polynomial $P\in U_k$ such
that $\deg P\le\deg Q$, $\ord_a(Q-P)\ge k$ for every $a\in\pcube$, and
$\ord_{\zero}(Q-P)\ge k-1$.  If $\deg Q\le n+2k-3$, then $P$ may be chosen
so that $\ord_a(Q-P)\ge k$ for every $a\in\cube$.  If $Q$ has integral
coefficients, so may $P$.
\end{lemma}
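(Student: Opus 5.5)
The plan is to reduce $Q$ monomial by monomial modulo an ideal whose elements vanish to high order on the cube. Write each monomial as $x^e=\prod_i x_i^{e_i}$. Since $x_i^2=y_i+x_i$, we have $x_i^{e_i}=x_i^{\epsilon_i}(y_i+x_i)^{\lfloor e_i/2\rfloor}$ with $\epsilon_i\in\{0,1\}$. Iterating this, every polynomial can be rewritten, with integer coefficients and without raising degree, as a $\Z$-combination of products $x^{\epsilon}y^{m}$ with $\epsilon\in\{0,1\}^n$ and $\deg(x^\epsilon y^m)=|\epsilon|+2|m|$. The monomial $x^\epsilon y^m$ has leading term $x^{\epsilon+2m}$, and conversely $x^e$ with $\sum\lfloor e_i/2\rfloor=|m|$ corresponds to exactly this shape. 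The key observation is that each $y_i$ vanishes to order $1$ at every point of $\cube$. Hence $y^m$ vanishes to order $\ge |m|$ at every $a\in\cube$, since $\ord_a$ is additive on products. So any term $x^\epsilon y^m$ with $|m|\ge k$ can simply be discarded: removing it changes the polynomial by something of order $\ge k$ everywhere on the cube.

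\textbf{Step 1.} After this rewriting, discard all terms with $|m|\ge k$. What remains is spanned by $x^\epsilon y^m$ with $|m|\le k-1$. Expanding these back into monomials, each $x^{\epsilon}y^m$ is a combination of $x^{e}$ with $e\le \epsilon+2m$ and $\sum\lfloor e_i/2\rfloor\le |m|\le k-1$. So the reduced-monomial condition holds, and degrees do not increase.

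\textbf{Step 2.} This is the main point: enforcing $\deg P\le n+2k-3$. A surviving term $x^\epsilon y^m$ with $|m|\le k-1$ has degree $|\epsilon|+2|m|\le n+2k-2$. Equality forces $\epsilon=\mathbf 1$ and $|m|=k-1$. For such a term, consider $x_1\cdots x_n\,y^m$. At a nonzero vertex $a$, some coordinate has $a_i=1$. Write $x_i=y_i/(x_i-1)$ locally; more concretely, use $x_i\cdot x_i=y_i+x_i$, or just note that $x_1\cdots x_n y^m$ vanishes to order $\ge k-1$ at $a$ only from $y^m$. The extra factor needed comes from the $x_j$ with $a_j=0$: each such $x_j$ vanishes at $a$. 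So the order at $a$ is at least $|m|+\#\{j:a_j=0\}$, which is $\ge k$ unless $a=\mathbf 1$. That bound is too weak, so instead I would use $x_i-1$. Replace $x_1\cdots x_n$ by $\prod_i x_i-\prod_i(x_i-1)$, which has degree $\le n-1$. The term $\prod_i(x_i-1)\,y^m$ vanishes to order $\ge |m|+1=k$ at every $a\ne\zero$, since some $a_i=1$ makes $x_i-1$ vanish there. At $\zero$ it vanishes to order exactly $|m|=k-1$, which is allowed by the conclusion $\ord_\zero(Q-P)\ge k-1$. After this replacement, the term becomes $\bigl(\prod x_i-\prod(x_i-1)\bigr)y^m$ of degree $\le n+2k-3$. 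Re-expanding the lower-degree part $\bigl(\prod x_i-\prod(x_i-1)\bigr)$ into $x^{\epsilon'}$ products, with $\epsilon'\in\{0,1\}^n$ and still $|m|\le k-1$, keeps us in $U_k$. All coefficients stay integral throughout.

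\textbf{Step 3.} This is the low-degree case. If $\deg Q\le n+2k-3$, then Step 1 never produces a term of degree $n+2k-2$, because rewriting does not raise degree. So Step 2 is never invoked, and every discarded piece has order $\ge k$ on all of $\cube$, including $\zero$. Integrality is automatic since every identity used has integer coefficients.

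I expect the only delicate point to be bookkeeping in Step 2. The replacement must be done only for top terms with $\epsilon=\mathbf 1$ and $|m|=k-1$. I would also confirm that these are the only terms of degree above $n+2k-3$ surviving Step 1, which the degree count above shows.
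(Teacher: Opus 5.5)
Your proof is correct and uses the same two ingredients as the paper. First, products of $k$ factors $y_i$ vanish to order at least $k$ on all of $\cube$. Second, the top-degree obstruction $x_1\cdots x_n\,y^m$ with $|m|=k-1$ is removed by subtracting $y^m\prod_j(x_j-1)$, which has order at least $k$ on $\pcube$ and order $k-1$ at $\zero$.

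The difference is only bookkeeping. You expand once in the unitriangular $\Z$-basis $\{x^\epsilon y^m\}$ and truncate by $y$-weight. The paper instead cancels bad leading monomials one at a time and needs a termination argument. Had you used the paper's blocks $y^m\prod_j(x_j-1)^{\varepsilon_j}$, your Step 2 would reduce to simply discarding those blocks.
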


\begin{proof}
For a monomial $x^e$, put
\[
 q(e)=\sum_{i=1}^n\left\lfloor\frac{e_i}{2}\right\rfloor.
\]
Call $x^e$ bad of the first kind if $q(e)\ge k$.  Such a monomial is the
unique top-degree monomial of
\[
 y_{i_1}\cdots y_{i_k}x^m,
\]
for suitable, possibly repeated indices $i_1,\ldots,i_k$, and a monomial
$x^m$.  Subtracting the appropriate coefficient multiple cancels $x^e$ and
introduces only monomials of smaller degree.  The subtracted polynomial has
order at least $k$ at every cube point.

A monomial that is not bad of the first kind has $q(e)\le k-1$.  It can
still violate the degree bound only if
\[
 |e|=n+2k-2,\qquad q(e)=k-1,
\]
and every $e_i$ is odd.  It is then the unique top-degree monomial of
\[
 y_{i_1}\cdots y_{i_{k-1}}\prod_{j=1}^n(x_j-1).
\]
Call such a monomial bad of the second kind.  The displayed polynomial has
order at least $k$ on $\pcube$: the $k-1$ factors $y_{i_\nu}$ supply order
$k-1$, and at each nonzero cube point at least one factor $x_j-1$ vanishes.
At the origin it has order $k-1$.

Remove a bad monomial of largest degree at each step.  Each subtraction
reduces either the largest bad degree or the number of bad monomials of that
degree; hence the process terminates.  The output has no monomial of either
kind.  Thus $q(e)\le k-1$ throughout its support, and its degree is at most
$n+2k-3$: a larger degree would force a monomial of the second kind.  The
output is therefore $k$-reduced, and degree never increases.  If
$\deg Q\le n+2k-3$, no monomial of the second kind can occur, so every
subtraction vanishes to order at least $k$ at the origin as well.
Every reducing polynomial used above has integral coefficients, and its
unique top-degree monomial has coefficient $1$.  Thus an integral input
produces integral
multipliers and an integral output.
\end{proof}

Let
\[
 M_t(n)=\#\{e\in\Z_{\ge0}^n:|e|<t\}=\binom{n+t-1}{n}.
\]
The monomial count in \cite[Claim~2.2]{SW} gives
\begin{equation}\label{eq:dim-U}
 \dim U_k=(2^n-1)M_k(n)+M_{k-1}(n).
\end{equation}
We next verify that the corresponding interpolation statement is
characteristic-free.  With $z=(z_1,\ldots,z_n)$, let
\[
 D^\beta P(a)=[z^\beta]P(a+z)
\]
denote the Hasse derivative.

\begin{lemma}[Hasse interpolation]\label{lem:interpolation}
The map that records all $D^\beta P(a)$ with $a\in\pcube$, $|\beta|<k$,
and all $D^\beta P(\zero)$ with $|\beta|<k-1$, is an isomorphism
\[
 U_k\longrightarrow F^{(2^n-1)M_k(n)+M_{k-1}(n)}.
\]
The analogous map from $U_k^{\Z}$ to
$\Z^{(2^n-1)M_k(n)+M_{k-1}(n)}$ is also an isomorphism.  Consequently,
\begin{equation}\label{eq:dim-V}
 \dim V_k=M_{k-1}(n),
\end{equation}
and restriction to the origin data gives an isomorphism
\begin{equation}\label{eq:integral-origin-data}
 V_k^{\Z}\longrightarrow \Z^{M_{k-1}(n)},
 \qquad P\longmapsto\bigl(D^\beta P(\zero)\bigr)_{|\beta|<k-1}.
\end{equation}
\end{lemma}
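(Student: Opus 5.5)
The plan is to prove surjectivity of the data map over $\Z$, which by a determinant argument gives bijectivity over $\Z$ and over every field $F$ at once. The source and target have the same rank $(2^n-1)M_k(n)+M_{k-1}(n)$ by \eqref{eq:dim-U}. Moreover $U_k^{\Z}$ has the $k$-reduced monomials as a $\Z$-basis, and $U_k$ is its base change to $F$. The Hasse derivatives $D^\beta x^e(a)=\binom{e}{\beta}a^{e-\beta}$ are integers, so the data map is represented by a single square integer matrix $A$ with respect to these bases, and the map over $F$ is $A\otimes F$. If $A$ is surjective over $\Z$, then $\det A=\pm1$. Its reduction to every $F$ is then invertible, which gives both isomorphism statements.

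For surjectivity, first realize arbitrary prescribed integer data by some integral polynomial $Q$, using the Chinese remainder theorem in $\Z[x_1,\ldots,x_n]$. For $a\in\cube$, let $\mathfrak m_a=(x_1-a_1,\ldots,x_n-a_n)$. If $a\ne b$ and $a_i\ne b_i$, then $\mathfrak m_a+\mathfrak m_b$ contains $\pm\bigl((x_i-a_i)-(x_i-b_i)\bigr)=\pm1$. Hence these ideals are pairwise comaximal, and so are their powers. The quotient $\Z[x]/\mathfrak m_a^{k}$ is free with basis the classes of $(x-a)^\beta$, $|\beta|<k$. The coordinate of $Q$ on this basis is exactly $D^\beta Q(a)$. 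So CRT produces an integral $Q$ with any prescribed values $D^\beta Q(a)$ for $a\in\pcube$, $|\beta|<k$, and $D^\beta Q(\zero)$ for $|\beta|<k-1$. We may simply prescribe the unused origin data of order $k-1$ to be $0$.

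Next apply Lemma~\ref{lem:reduction} to $Q$. It gives an integral $P\in U_k^{\Z}$ with $\ord_a(Q-P)\ge k$ for $a\in\pcube$ and $\ord_{\zero}(Q-P)\ge k-1$. Thus $D^\beta P(a)=D^\beta Q(a)$ for all recorded pairs $(a,\beta)$: those with $|\beta|<k$ at nonzero vertices, and those with $|\beta|<k-1$ at the origin. This is precisely why the origin data stops at order $k-2$. So the map on $U_k^{\Z}$ is surjective, and the first two claims follow as explained above.

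For the consequences, observe that $V_k$ (respectively $V_k^{\Z}$) is exactly the preimage of the coordinate subspace where all data at nonzero vertices vanish. Composing the isomorphism with projection onto the origin coordinates therefore gives an isomorphism from $V_k$ onto $F^{M_{k-1}(n)}$, and from $V_k^{\Z}$ onto $\Z^{M_{k-1}(n)}$. This yields \eqref{eq:dim-V} and \eqref{eq:integral-origin-data}.

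The only delicate point is the CRT step over $\Z$ rather than over a field. One must check that the local quotients are free with Hasse-derivative coordinates, which holds after the translation $x\mapsto x+a$, and that the comaximality comes from a unit $\pm1$ rather than merely a nonzero constant.
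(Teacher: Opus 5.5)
Your proof is correct and follows the paper's route: realize arbitrary Hasse data by some integral polynomial, apply Lemma~\ref{lem:reduction} to land in $U_k^{\Z}$ without changing the recorded data, and conclude from equality of ranks. The differences are cosmetic. The paper writes down explicit CRT lifts $Q_{a,\beta}(x)=(x-a)^\beta\prod_{i=1}^n\bigl(1-(x_i-a_i)^{2k}\bigr)^k$ rather than invoking comaximality abstractly, and it handles a field $F$ by rerunning the same surjectivity-plus-dimension argument over $F$ rather than deducing it from $\det A=\pm1$.
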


\begin{proof}
The dimensions agree by \eqref{eq:dim-U}.  To isolate the coordinate
$(a,\beta)$, where $|\beta|<k$ (and $|\beta|<k-1$ if $a=\zero$), take
\[
 Q_{a,\beta}(x)=(x-a)^\beta
   \prod_{i=1}^n\bigl(1-(x_i-a_i)^{2k}\bigr)^k.
\]
At $a$, this is $(x-a)^\beta$ plus terms of order at least $|\beta|+2k$.
At another cube point $b$, choose $i$ with $b_i\ne a_i$.  The polynomial
$1-(x_i-a_i)^{2k}$ vanishes at $b$, so its $k$th power has order at least
$k$.  Thus the recorded data of $Q_{a,\beta}$ are the desired
standard basis vector.  Lemma~\ref{lem:reduction} replaces
$Q_{a,\beta}$ by a polynomial in $U_k$ without changing those data.
Surjectivity, and hence bijectivity, follows.

The same isolating polynomials and the integral reduction prove surjectivity
over $\Z$.  The source and target are free abelian groups of the same rank,
so that map too is an isomorphism.  Setting the punctured-cube coordinates
equal to zero gives \eqref{eq:dim-V} and
\eqref{eq:integral-origin-data}.
\end{proof}

For $d\in\Z_{\ge0}^n$ with $|d|\le k-2$, put
\begin{equation}\label{eq:wkd}
 w_{k,d}=x_1\cdots x_n\,x^{2d}
 \left(x_1^{2(k-|d|)-3}+\cdots+x_n^{2(k-|d|)-3}\right).
\end{equation}
These are homogeneous of degree $n+2k-3$.  Let $W_k$ be their span.
Sauermann and Wigderson proved the following statement over $\mathbb R$
\cite[Claim~2.7]{SW}.  We include their short monomial argument to make its
characteristic independence explicit.

\begin{lemma}[Top-degree basis]\label{lem:W-basis}
If $n\ge k-1$, then the polynomials $w_{k,d}$, $|d|\le k-2$, form a basis
of $W_k$.  Hence
\[
 \dim W_k=M_{k-1}(n)=\dim V_k.
\]
\end{lemma}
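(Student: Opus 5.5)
The plan is to treat the $w_{k,d}$ as explicit sums of monomials and show directly that they are linearly independent over any field $F$. The spanning part is automatic, since $W_k$ is defined as their span. For the dimension statement, the index set $\{d\in\Z_{\ge0}^n:|d|\le k-2\}$ has $M_{k-1}(n)$ elements, and this equals $\dim V_k$ by \eqref{eq:dim-V} in Lemma~\ref{lem:interpolation}.

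\emph{Monomial support.} First compute the monomials of $w_{k,d}$. Put $s=k-|d|\ge2$. Then $w_{k,d}$ is a sum of exactly $n$ monomials $m_i(d)$, one for each $i$, each with coefficient $1$. In $m_i(d)$ the exponent of $x_j$ is $1+2d_j$ for $j\ne i$, and the exponent of $x_i$ is $1+2d_i+2s-3=2(d_i+s-1)$. These $n$ monomials are distinct, since $m_i(d)$ has exactly one even exponent, at position $i$.

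\emph{Which $d'$ share a given monomial.} Suppose $m_i(d)=m_{i'}(d')$. Comparing the unique even coordinate gives $i=i'$. Comparing the odd coordinates gives $d'_j=d_j$ for all $j\ne i$. The $i$-th exponent $2(k-1-\sum_{j\ne i}d_j)$ depends only on the entries of $d$ off position $i$, so it gives no further constraint. Hence $m_i(d)$ occurs in $w_{k,d'}$ precisely when $d'$ agrees with $d$ off coordinate $i$. In particular $d'_i$ is arbitrary subject to $|d'|\le k-2$. This coefficient-sharing is the only real obstacle, and it is exactly where the hypothesis $n\ge k-1$ enters.

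\emph{Triangularity argument.} Suppose $\sum_d c_d w_{k,d}=0$ with not all $c_d$ zero.
\begin{enumerate}
\item Choose $d$ with $c_d\ne0$ and $|d|$ maximal.
\item Since $|d|\le k-2<n$, some coordinate satisfies $d_i=0$.
\item By the previous step, every $d'\ne d$ whose $w_{k,d'}$ contains $m_i(d)$ has $d'_i>0=d_i$ and agrees with $d$ elsewhere. Hence $|d'|>|d|$, so $c_{d'}=0$ by maximality.
\item The coefficient of $m_i(d)$ in the sum is therefore $c_d\ne0$, a contradiction.
\end{enumerate}
All coefficients used are $0$ or $1$, so the argument holds in every characteristic, and indeed over $\Z$. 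This gives linear independence, and with it the basis and dimension claims.
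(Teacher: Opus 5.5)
Your proof is correct and is essentially the paper's argument. The paper also takes $d$ with $c_d\ne0$ and $|d|$ maximal (phrased as $m_d=2(k-|d|)-3$ minimal), uses $|d|\le k-2<n$ to find $d_i=0$, and uses parity to show that the monomial $m_i(d)$ can only come from terms $w_{k,e}$ with $e$ agreeing with $d$ off coordinate $i$, which maximality forces to be $e=d$. Your observation that the $i$-th exponent imposes no further constraint is the paper's relation $2e_i+m_e=m_d$ made explicit.
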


\begin{proof}
They span by definition.  Divide by $x_1\cdots x_n$, and put
$m_d=2(k-|d|)-3$.  Suppose that
\[
 \sum_{|d|\le k-2}c_d
 \left(x_1^{m_d}+\cdots+x_n^{m_d}\right)x^{2d}=0.
\]
Choose $d$ with $c_d\ne0$ and $m_d$ as small as possible.  Since
$|d|\le k-2<n$, some $d_i$ is zero.  Suppose that a summand indexed by
$e$, with $c_e\ne0$, contributes the monomial
$x_i^{m_d}x^{2d}$.  Parity forces its odd exponent to occur in $x_i$; hence
$e_j=d_j$ for $j\ne i$ and $2e_i+m_e=m_d$.  Minimality gives
$m_e\ge m_d$, so $e_i=0$, $m_e=m_d$, and $e=d$.  The coefficient of this
monomial is therefore $c_d\ne0$, a contradiction.  Thus the spanning
family is independent over every field.  Its cardinality is
$M_{k-1}(n)$.
\end{proof}

Let
\[
 \varphi_k(P)=[P]_{n+2k-3}
\]
denote the homogeneous part of degree $n+2k-3$.  We initially regard
$\varphi_k$ as taking values in the full homogeneous component; the explicit
basis below will show that its image lies in $W_k$.

\section{The Catalan polynomial}\label{sec:catalan}

For $s\ge2$, define
\begin{equation}\label{eq:Ns}
 N_s(x)=(x-1)+\sum_{a=1}^{s-1}(-1)^aC_{a-1}\bigl(x(x-1)\bigr)^a.
\end{equation}

\begin{lemma}[Two-point interpolation]\label{lem:Ns}
In $\Z[x]$,
\[
 N_s(x)\equiv0\pmod{(x-1)^s},
 \qquad
 N_s(x)\equiv2x-1\pmod{x^s}.
\]
The quotient $N_s(x)/(x-1)^s$ has degree $s-2$ and leading coefficient
$(-1)^{s-1}C_{s-2}$.
\end{lemma}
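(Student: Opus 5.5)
The plan is to recognize the sum in \eqref{eq:Ns} as a truncation of the Catalan generating function evaluated at $v=-x(x-1)=x(1-x)$. The key fact is that $1-4v=(2x-1)^2$ is a perfect square. To avoid square roots and branch choices, I will work purely algebraically with the quadratic equation satisfied by the Catalan series, in the power series rings $\Z[[x]]$ and $\Z[[x-1]]$.

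\emph{Step 1: the generating series.} Let $S(v)=\sum_{a\ge1}C_{a-1}v^a\in\Z[[v]]$. The Catalan recurrence $C_j=\sum_{i+i'=j-1}C_iC_{i'}$ gives the identity $S^2-S+v=0$ in $\Z[[v]]$. Note that $S$ has zero constant term.

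\emph{Step 2: substitution at $x=0$.} Set $v=x(1-x)\in x\Z[[x]]$. Substitution is legitimate because $v$ has zero constant term, and it gives $\sum_{a\ge1}(-1)^aC_{a-1}(x(x-1))^a=S(x(1-x))$. This element $T$ of $\Z[[x]]$ satisfies
\[
T^2-T+x-x^2=(T-x)(T-(1-x))=0.
\]
Since $\Z[[x]]$ is a domain, $T=x$ or $T=1-x$. The second option is excluded because $T$ has zero constant term, so $T=x$. The terms with $a\ge s$ are multiples of $(x(x-1))^s$, hence lie in $x^s\Z[[x]]$. Therefore
\[
N_s\equiv(x-1)+x=2x-1\pmod{x^s}.
\]
This congruence holds in $\Z[[x]]$, and hence in $\Z[x]$, since $N_s$ is a polynomial.

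\emph{Step 3: substitution at $x=1$.} Write $y=x-1$, so that $v=-y(1+y)\in y\Z[[y]]$. The same factorization gives $T=1+y$ or $T=-y$, and the zero-constant-term condition forces $T=-y=1-x$. Truncating at $a\le s-1$ again only discards multiples of $y^s$. Hence
\[
N_s\equiv(x-1)+(1-x)=0\pmod{(x-1)^s}.
\]
Because $(x-1)^s$ is monic, the divisibility holds in $\Z[x]$ with an integral quotient.

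\emph{Step 4: degree and leading coefficient.} The top term of $N_s$ comes from $a=s-1$. It has degree $2(s-1)$ and leading coefficient $(-1)^{s-1}C_{s-2}$, which is nonzero in $\Z$. Dividing by the monic polynomial $(x-1)^s$, the quotient has degree $2s-2-s=s-2$ and the same leading coefficient.

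The only delicate point is Steps 2 and 3: we must justify choosing the correct root of the quadratic rather than appealing to analytic square roots. The domain argument combined with the constant-term comparison handles this cleanly, and it works over $\Z$.
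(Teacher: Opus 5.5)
Your proof is correct and follows essentially the paper's route. Both arguments identify $\sum_a(-1)^aC_{a-1}(x(x-1))^a$ with $1-x$ near $x=1$ through the Catalan functional equation, choosing the root with zero constant term, and both read off the leading coefficient from the $a=s-1$ term. The differences are minor: the paper phrases root selection as uniqueness of the solution of $f=w+f^2$ where you factor in the domain $\Z[[x-1]]$, and the paper gets the congruence modulo $x^s$ from the one modulo $(x-1)^s$ via the symmetry $N_s(x)-N_s(1-x)=2x-1$, whereas you repeat the argument in $\Z[[x]]$.
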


\begin{proof}
Put $x=1+u$ and $w=-u(1+u)$.  If
$\mathcal C(w)=\sum_{j\ge0}C_jw^j$, then
$f(w)=w\mathcal C(w)$ is the unique formal series with zero constant term
satisfying $f=w+f^2$.  Since $-u=w+(-u)^2$, uniqueness gives
$w\mathcal C(w)=-u$.  Therefore
\[
 u+\sum_{a\ge1}(-1)^aC_{a-1}u^a(1+u)^a=0.
\]
Truncating after $a=s-1$ changes only terms divisible by $u^s$, proving the
first congruence.  Moreover $x(x-1)$ is unchanged by $x\mapsto1-x$, so
\[
 N_s(x)-N_s(1-x)=2x-1.
\]
The first congruence applied to $N_s(1-x)$ gives the second.  Finally, the
$a=s-1$ summand has degree $2s-2$ and leading coefficient
$(-1)^{s-1}C_{s-2}$.
\end{proof}

Write $u_i=x_i-1$.  For $q\ge0$ and $\varepsilon\in\{0,1\}$, put
\[
 b_{q,\varepsilon}(x)=\bigl(x(x-1)\bigr)^q(x-1)^\varepsilon.
\]
The polynomial $b_{q,\varepsilon}$ is $x^{2q+\varepsilon}$ plus terms of
smaller degree, and every nonnegative degree has a unique form
$2q+\varepsilon$.  Hence these polynomials form a unitriangular basis of
$\Z[x]$.  Their tensor products form a unitriangular basis of
$\Z[x_1,\ldots,x_n]$; we call them \emph{blocks}.  Distinct blocks have
distinct leading monomials.  Therefore the degree of a polynomial is the
largest degree of a block with nonzero coordinate.

For a monomial $m$, let $[m]P$ denote its coefficient in $P$.  The expansion
\[
 N_s(x_i)=u_i+\sum_{a=1}^{s-1}(-1)^aC_{a-1}y_i^a
\]
suggests truncating $\prod_iN_s(x_i)$ by total $y$-weight.
Define
\begin{equation}\label{eq:ghat}
\ghat_s=(-1)^{s-1}
\sum_{\substack{S\subseteq[n],\ a_i\ge1\ (i\in S)\\
                 \sum_{i\in S}a_i\le s-1}}
 (-1)^{\sum_{i\in S}a_i}
 \left(\prod_{i\in S}C_{a_i-1}y_i^{a_i}\right)
 \left(\prod_{j\notin S}u_j\right),
\end{equation}
where the term for $S=\varnothing$ is
$(-1)^{s-1}\prod_ju_j$.

\begin{theorem}[Catalan truncation]\label{thm:ghat}
For every $s\ge2$ and $n\ge1$:
\begin{enumerate}
\item $\ghat_s\in V_s^{\Z}$ and
      $\ghat_s(\zero)=(-1)^{n+s-1}$;
\item in $\Z[x_1,\ldots,x_n]$,
\begin{equation}\label{eq:ghat-top}
 [\ghat_s]_{n+2s-3}=C_{s-2}w_{s,0};
\end{equation}
\item if $\varnothing\ne S\subseteq[n]$, $a_i\ge1$, and
      $\sum_{i\in S}a_i=s-1$, then
\begin{equation}\label{eq:layer}
 \left[\prod_{i\in S}x_i^{2a_i}
       \prod_{j\notin S}x_j\right]\ghat_s
 =\prod_{i\in S}C_{a_i-1}.
\end{equation}
\end{enumerate}
\end{theorem}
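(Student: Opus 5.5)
The plan is to read $\ghat_s$ as a signed truncation of $\prod_{i}N_s(x_i)$ and to check each claim term by term in the block basis $b_{q,\varepsilon}$. Expanding $\prod_{i=1}^nN_s(x_i)$ with $N_s(x_i)=u_i+\sum_{a=1}^{s-1}(-1)^aC_{a-1}y_i^a$ gives a sum over the same index data $(S,(a_i)_{i\in S})$ as in \eqref{eq:ghat}, but with only the constraint $a_i\le s-1$. So $(-1)^{s-1}\ghat_s$ is exactly the part of $\prod_iN_s(x_i)$ of total $y$-weight $\sum_{i\in S}a_i\le s-1$. Every summand is, up to sign and an integer Catalan factor, the single block $\prod_{i\in S}b_{a_i,0}(x_i)\prod_{j\notin S}b_{0,1}(x_j)$. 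Hence $\ghat_s$ is integral, and its degree and monomials can be read off block by block.

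\emph{Part (1).} First I will check that $\ghat_s\in U_s$. A summand indexed by $(S,a)$ has degree $2\sum_{i\in S}a_i+n-|S|$. Its monomials $x^e$ satisfy $\sum_i\lfloor e_i/2\rfloor\le\sum_{i\in S} a_i\le s-1$. If $S=\varnothing$ the degree is $n$; otherwise it is at most $2(s-1)+n-1=n+2s-3$. So $\ghat_s$ is $s$-reduced.

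For the vanishing condition, fix $a\in\pcube$ and let $T=\{j:a_j=1\}\ne\varnothing$. Each $y_i=x_i(x_i-1)$ vanishes at every cube point, so $y_i$ has order at least $1$ at $a$. Consequently every summand of total $y$-weight at least $s$ has order at least $s$ at $a$, and therefore
\[
\ord_a\Bigl(\ghat_s-(-1)^{s-1}\prod_iN_s(x_i)\Bigr)\ge s .
\]
For any $j\in T$, Lemma~\ref{lem:Ns} gives $(x_j-1)^s\mid N_s(x_j)$, so the full product also has order at least $s$ at $a$. Together these give $\ord_a\ghat_s\ge s$.

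At the origin every $y_i$ vanishes, so only the $S=\varnothing$ term survives. This gives $\ghat_s(\zero)=(-1)^{s-1}(-1)^n$.

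\emph{Part (2).} A summand reaches degree $n+2s-3$ only when $\sum_{i\in S} a_i=s-1$ and $|S|=1$, that is, $S=\{i\}$ with $a_i=s-1$. Its top-degree part is
\[
(-1)^{s-1}(-1)^{s-1}C_{s-2}\,x_i^{2s-2}\prod_{j\ne i}x_j
=C_{s-2}\,x_1\cdots x_n\,x_i^{2s-3}.
\]
Summing over $i$ gives $C_{s-2}w_{s,0}$, which is \eqref{eq:ghat-top}.

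\emph{Part (3).} This is the step needing care, because lower monomials of other blocks could in principle hit the target monomial $\prod_{i\in S}x_i^{2a_i}\prod_{j\notin S}x_j$. The one-variable block $b_{q,\varepsilon}$ has $x$-degree $2q+\varepsilon$. Suppose a summand indexed by $(S',a')$ contributes to the target monomial. For each $i\in S$, the block in $x_i$ must have degree at least $2a_i\ge2$. That rules out $i\notin S'$, since then the block is $u_i$ of degree $1$, and it forces $a'_i\ge a_i$. So $S\subseteq S'$ and $\sum_{i\in S'}a'_i\ge\sum_{i\in S}a_i=s-1$. The truncation bound then forces $S'=S$ and $a'=a$, because any extra index in $S'$ would add weight at least $1$. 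In that single block the target monomial is its leading monomial, with coefficient $1$. The sign is $(-1)^{s-1}(-1)^{s-1}=1$, which yields \eqref{eq:layer}.

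The main obstacle is this exclusion argument in part (3): making precise that lower-order monomials of heavier or differently supported blocks cannot contribute. The degree-in-each-variable argument above is what I would use.
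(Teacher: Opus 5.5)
Your proposal is correct and follows the paper's proof essentially step for step. You view $\ghat_s$ as the $y$-weight truncation of $(-1)^{s-1}\prod_iN_s(x_i)$, obtain punctured-cube vanishing from Lemma~\ref{lem:Ns} together with the fact that discarded blocks have weight at least $s$, and read off the top layer from the blocks with $A=s-1$, $|S|=1$; you then isolate the layer monomial by forcing $S\subseteq S'$ and $a'_i\ge a_i$ and invoking the weight bound, where the surviving coefficient is the sign times $\prod_{i\in S}C_{a_i-1}$ times the block's leading coefficient $1$.
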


\begin{proof}
A block in \eqref{eq:ghat}, indexed by $S$ and the $a_i$, has total
$y$-weight $A=\sum_{i\in S}a_i\le s-1$.  Every monomial in the block
satisfies
\[
 \sum_{i=1}^n\left\lfloor\frac{e_i}{2}\right\rfloor\le A.
\]
The block has degree $n+2A-|S|$.  If $S\ne\varnothing$, this is at most
$n+2s-3$; if $S=\varnothing$, its degree is $n$.  Thus every retained block
is $s$-reduced.  The product $\prod_iN_s(x_i)$ vanishes to
order at least $s$ at every point of $\pcube$ by Lemma~\ref{lem:Ns}.
Every block deleted in forming $\ghat_s$ has $y$-weight at least $s$ and
hence vanishes to order at least $s$ at every cube point.  This proves the
required punctured-cube vanishing.  At the origin, every term with
$S\ne\varnothing$ vanishes, and the empty term has value
$(-1)^{n+s-1}$.

A nonzero block has degree $n+2s-3$ only when $A=s-1$ and $|S|=1$.
Its leading monomial gives $C_{s-2}w_{s,0}$, proving part 2.

For part 3, let
$m=\prod_{i\in S}x_i^{2a_i}\prod_{j\notin S}x_j$.
Suppose a block indexed by $T$ and positive integers $b_i$ contributes to
$m$.  Every $i\in S$ belongs to $T$, and $b_i\ge a_i$.  If
$j\in T\setminus S$, then $b_j=1$.  Hence
\[
 \sum_{i\in T}b_i\ge s-1+|T\setminus S|.
\]
The truncation allows weight at most $s-1$, so $T=S$ and $b_i=a_i$.
Exactly one block contributes, and its sign is
$(-1)^{s-1+\sum a_i}=1$.
\end{proof}

The singleton case of \eqref{eq:layer} is the Catalan top coefficient.  The
case $|S|=r$ describes the layer of degree $n+2s-2-r$ and will provide the
pivots in Section~\ref{sec:degree}.

\section{An explicit basis and a diagonal top map}\label{sec:basis}

For $d\in\Z_{\ge0}^n$ with $|d|\le k-2$, set
\begin{equation}\label{eq:Gkd}
 G_{k,d}=y^d\ghat_{k-|d|},
 \qquad y^d=\prod_{i=1}^ny_i^{d_i}.
\end{equation}

\begin{theorem}[Integral Catalan basis]\label{thm:basis}
Let $k\ge2$.
\begin{enumerate}
\item The polynomials $G_{k,d}$, $|d|\le k-2$, form a $\Z$-basis of
      $V_k^{\Z}$.  Their reductions form a basis of $V_k$ over every field.
\item Their initial forms at the origin are
\begin{equation}\label{eq:initial-G}
 \operatorname{in}_{\zero}G_{k,d}=(-1)^{n+k-1}x^d.
\end{equation}
Consequently, if $P=\sum_dc_dG_{k,d}\ne0$ over a field, then
\begin{equation}\label{eq:origin-order}
 \ord_{\zero}P=\min\{|d|:c_d\ne0\}.
\end{equation}
\item If $n\ge k-1$, then
\begin{equation}\label{eq:diagonal}
 \varphi_k(G_{k,d})=C_{k-|d|-2}\,w_{k,d}.
\end{equation}
Thus $\varphi_k(V_k)\subseteq W_k$, and the matrix of
$\varphi_k:V_k\to W_k$ in the bases $\{G_{k,d}\}$ and $\{w_{k,d}\}$ is
diagonal.
\end{enumerate}
\end{theorem}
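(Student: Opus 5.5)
The plan is to check the three parts in the order 2, 1, 3. The origin data computed in part 2 is exactly what makes the basis claim in part 1 a triangularity statement via Lemma~\ref{lem:interpolation}.

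\emph{Membership in $V_k^{\Z}$.} Fix $d$ with $|d|\le k-2$ and put $s=k-|d|\ge2$.

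For vanishing, note that each $y_i$ vanishes at every cube point, so $y^d$ has order at least $|d|$ at every point of $\cube$. By Theorem~\ref{thm:ghat}, $\ghat_s$ has order at least $s$ on $\pcube$. Orders add under products, so $\ord_a G_{k,d}\ge |d|+s=k$ for $a\in\pcube$.

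For $k$-reducedness, I will use the block structure. Each term of $\ghat_s$ is a block, indexed by $S$ and the $a_i$:
\begin{itemize}
\item in coordinate $i\in S$ it is $y_i^{a_i}$, so $(q_i,\varepsilon_i)=(a_i,0)$;
\item in coordinate $j\notin S$ it is $u_j$, so $(q_j,\varepsilon_j)=(0,1)$.
\end{itemize}
Multiplying by $y^d$ adds $d_i$ to each $q_i$, so the product is again a block. Its total $y$-weight is $|d|+A\le k-1$, where $A=\sum_{i\in S}a_i\le s-1$. Every monomial of a block satisfies $\sum\lfloor e_i/2\rfloor\le\sum q_i$, so the floor condition holds. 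The block's degree is $2(|d|+A)+n-|S|$:
\begin{itemize}
\item if $S\ne\varnothing$, this is at most $n+2k-3$;
\item if $S=\varnothing$, then $A=0$ and the degree is $n+2|d|\le n+2k-4$.
\end{itemize}
Hence $G_{k,d}\in V_k^{\Z}$. I expect this reducedness bookkeeping to be the only delicate point.

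\emph{Initial forms at the origin (part 2).} The initial form of $y_i=x_i(x_i-1)$ at $\zero$ is $-x_i$, so $\operatorname{in}_{\zero}y^d=(-1)^{|d|}x^d$. By Theorem~\ref{thm:ghat}, $\ghat_s(\zero)=(-1)^{n+s-1}\neq 0$, so $\operatorname{in}_{\zero}\ghat_s$ is this constant. Multiplying gives
\[
\operatorname{in}_{\zero}G_{k,d}=(-1)^{|d|}(-1)^{n+s-1}x^d=(-1)^{n+k-1}x^d,
\]
using $|d|+s=k$.

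Now take $P=\sum_d c_dG_{k,d}\ne0$ and let $m=\min\{|d|:c_d\ne0\}$. Every term has order at least $|d|\ge m$ at $\zero$. The degree-$m$ part of $P(x)$ (expanded at $\zero$) is $(-1)^{n+k-1}\sum_{|d|=m}c_dx^d$. This is nonzero because distinct monomials are independent, which gives \eqref{eq:origin-order}.

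\emph{Basis (part 1).} Lemma~\ref{lem:interpolation} identifies $V_k^{\Z}$ with $\Z^{M_{k-1}(n)}$ via the Hasse data $D^\beta P(\zero)$, $|\beta|<k-1$. For $G_{k,d}$, part 2 gives:
\begin{itemize}
\item $D^\beta G_{k,d}(\zero)=0$ whenever $|\beta|<|d|$;
\item $D^\beta G_{k,d}(\zero)=(-1)^{n+k-1}\delta_{\beta,d}$ whenever $|\beta|=|d|$.
\end{itemize}
Order the indices by $|d|$. The coordinate matrix of the family $\{G_{k,d}\}$ is then block triangular with diagonal blocks $\pm I$, so it is unimodular. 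Therefore the $G_{k,d}$ form a $\Z$-basis of $V_k^{\Z}$. The same matrix is invertible over any field, and the field version of Lemma~\ref{lem:interpolation} then shows the reductions form a basis of $V_k$.

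\emph{Diagonal top map (part 3).} By Theorem~\ref{thm:ghat}, $\deg\ghat_s\le n+2s-3$ and $[\ghat_s]_{n+2s-3}=C_{s-2}w_{s,0}$. The top homogeneous part of $y^d$ is $x^{2d}$, of degree $2|d|$. Hence
\[
[G_{k,d}]_{n+2k-3}=x^{2d}\,C_{s-2}\,w_{s,0}.
\]
Since $w_{s,0}=x_1\cdots x_n\sum_i x_i^{2s-3}$ and $2s-3=2(k-|d|)-3$, this equals $C_{k-|d|-2}\,w_{k,d}$ by \eqref{eq:wkd}. So $\varphi_k$ maps $V_k$ into $W_k$. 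The hypothesis $n\ge k-1$ enters only through Lemma~\ref{lem:W-basis}, which makes $\{w_{k,d}\}$ a basis of $W_k$; with that, the matrix of $\varphi_k$ in the bases $\{G_{k,d}\}$ and $\{w_{k,d}\}$ is diagonal.
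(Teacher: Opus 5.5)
Your proposal is correct and follows essentially the same route as the paper. The shared steps are the block-by-block check of reducedness and vanishing, the initial form obtained from $\operatorname{in}_{\zero}y^d=(-1)^{|d|}x^d$ and $\ghat_s(\zero)=(-1)^{n+s-1}$, the unimodular triangular origin-data matrix for the $\Z$-basis, and the product of top homogeneous parts for the diagonal formula. The only difference is cosmetic: for the field basis you invert the same triangular matrix via Lemma~\ref{lem:interpolation}, whereas the paper uses independence of the distinct initial monomials together with $\dim V_k=M_{k-1}(n)$, which amounts to the same thing.
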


\begin{proof}
Put $t=|d|$ and $s=k-t$.  The polynomial $y^d$ has order $t$ at every cube
point, whereas $\ghat_s$ has order at least $s$ on $\pcube$.  Hence
$G_{k,d}$ has the required vanishing.  A block of $G_{k,d}$ indexed by
$S$ and parts of total weight $A$ has degree
\[
 n+2(t+A)-|S|.
\]
If $S\ne\varnothing$, this is at most $n+2k-3$; if $S=\varnothing$, it is
at most $n+2k-4$.  Every monomial in that block satisfies
\[
 \sum_i\left\lfloor\frac{e_i}{2}\right\rfloor\le t+A\le k-1.
\]
Thus $G_{k,d}\in V_k^{\Z}$.

The initial form of $y^d$ at the origin is $(-1)^tx^d$, and
$\ghat_s(\zero)=(-1)^{n+s-1}$, proving \eqref{eq:initial-G}.  Over a field,
these distinct initial monomials make the $G_{k,d}$ linearly independent.
Their number is
\[
 \#\{d:|d|\le k-2\}=M_{k-1}(n)=\dim V_k,
\]
so they form a basis, and the same argument gives
\eqref{eq:origin-order}.

It remains to check the integral assertion.  Order the multi-indices first
by $|d|$.  Under the origin-data isomorphism
\eqref{eq:integral-origin-data}, the columns belonging to $G_{k,d}$ have
zero entries in degrees below $|d|$, while their degree-$|d|$ block is
$(-1)^{n+k-1}$ times the identity.  The resulting integer matrix is
triangular with determinant $\pm1$.  Hence the $G_{k,d}$ form a $\Z$-basis
of $V_k^{\Z}$.

Finally, the top part of $y^d$ is $x^{2d}$.  Equation
\eqref{eq:ghat-top} gives
\[
 \varphi_k(G_{k,d})
 =x^{2d}C_{s-2}w_{s,0}
 =C_{k-|d|-2}w_{k,d},
\]
as claimed.
\end{proof}

For $n\ge k-1$, let
\[
 W_k^{\Z}=\bigoplus_{|d|\le k-2}\Z w_{k,d}.
\]
Theorem~\ref{thm:basis} shows that the integral top map
$\varphi_k^{\Z}:V_k^{\Z}\to W_k^{\Z}$ is well defined.

\begin{corollary}[Integral diagonal presentation]\label{cor:kernel}
Assume $n\ge k-1$.  Then
\begin{equation}\label{eq:cokernel}
 \operatorname{coker}\varphi_k^{\Z}\cong
 \bigoplus_{t=0}^{k-2}
 \left(\Z/C_{k-t-2}\Z\right)^{\binom{n+t-1}{n-1}}.
\end{equation}
Over a field $F$,
\begin{align}
 \ker\varphi_k
 &=\operatorname{span}\{G_{k,d}:C_{k-|d|-2}=0\text{ in }F\},
 \label{eq:kernel-basis}\\
 \dim\ker\varphi_k
 &=\sum_{\substack{0\le t\le k-2\\C_{k-t-2}=0\text{ in }F}}
     \binom{n+t-1}{n-1}.
 \label{eq:nullity}
\end{align}
With corresponding orderings of the two integral bases,
\begin{equation}\label{eq:determinant}
 \det\varphi_k^{\Z}=
 \prod_{j=0}^{k-2}C_j^{\binom{n+k-j-3}{n-1}}.
\end{equation}
\end{corollary}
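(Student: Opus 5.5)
This follows directly from Theorem~\ref{thm:basis}, part 3, by reading off the matrix of $\varphi_k^{\Z}$. Lemma~\ref{lem:W-basis} gives that the $w_{k,d}$ are linearly independent over $\Z$, since they are independent over $\mathbb Q$. Hence $W_k^{\Z}$ is free with basis $\{w_{k,d}\}_{|d|\le k-2}$. Theorem~\ref{thm:basis}, part 1, gives that $\{G_{k,d}\}$ is a $\Z$-basis of $V_k^{\Z}$. Equation~\eqref{eq:diagonal} then says that, in these bases indexed by the same set, the integer matrix of $\varphi_k^{\Z}$ is the diagonal matrix with entry $C_{k-|d|-2}$ at position $d$.

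For the cokernel, a diagonal map $\Z^N\to\Z^N$ with entries $c_d$ has cokernel $\bigoplus_d\Z/c_d\Z$. I would group the indices by $t=|d|$. The number of $d\in\Z_{\ge0}^n$ with $|d|=t$ is $\binom{n+t-1}{n-1}$, which gives \eqref{eq:cokernel}. The determinant \eqref{eq:determinant} is the product of the diagonal entries, $\prod_{t=0}^{k-2}C_{k-t-2}^{\binom{n+t-1}{n-1}}$. Substituting $j=k-t-2$ turns the exponent into $\binom{n+k-j-3}{n-1}$, as required. Because the $C_j$ are positive, there is no sign issue beyond the choice of orderings.

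For the field statements, I would base change along $\Z\to F$. Part 1 of Theorem~\ref{thm:basis} says the reductions of the $G_{k,d}$ form a basis of $V_k$. Lemma~\ref{lem:W-basis} says the $w_{k,d}$ remain a basis of $W_k$ over $F$. Formula \eqref{eq:diagonal} holds over $F$ with $C_{k-|d|-2}$ read in $F$, because $\varphi_k$ commutes with reduction of coefficients. A diagonal map has kernel spanned by the basis vectors whose diagonal entry vanishes, which is \eqref{eq:kernel-basis}. Counting those vectors by $t=|d|$ gives \eqref{eq:nullity}.

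There is essentially no obstacle. The one point to state explicitly is that $\varphi_k$ over $F$ agrees with the reduction of $\varphi_k^{\Z}$. This holds because taking a homogeneous component is coefficientwise.
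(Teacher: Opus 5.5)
Your proposal is correct and follows essentially the same route as the paper: both read off the diagonal matrix with entries $C_{k-|d|-2}$ from Theorem~\ref{thm:basis} in the bases $\{G_{k,d}\}$ and $\{w_{k,d}\}$. Both then get the cokernel, kernel, nullity and determinant by grouping indices by weight $t$ and reducing to $F$. Your explicit remarks on the independence of the $w_{k,d}$ over $\Z$ and on the compatibility of $\varphi_k$ with coefficient reduction are points the paper leaves implicit.
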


\begin{proof}
In the $\Z$-basis of Theorem~\ref{thm:basis} and the defining basis
$\{w_{k,d}\}$ of $W_k^{\Z}$, the matrix is diagonal; the entry indexed by a
multi-index of weight $t$ is $C_{k-t-2}$.  There are
$\binom{n+t-1}{n-1}$ such indices.  This proves \eqref{eq:cokernel};
reduction to $F$ gives
\eqref{eq:kernel-basis} and \eqref{eq:nullity}.  Thus rank loss in
characteristic $p$ is precisely the $p$-torsion visible in the integral
cokernel.  Multiplying the diagonal entries gives \eqref{eq:determinant}.
\end{proof}

The integral presentation also records what happens over rings with torsion.
For a commutative ring $R$, tensoring Lemma~\ref{lem:interpolation}
identifies $V_k^{\Z}\otimes_{\Z}R$ with the module of $k$-reduced
polynomials over $R$ whose punctured-cube Hasse data of order less than $k$
vanish.

\begin{corollary}[Base change]\label{cor:base-change}
Let $R$ be a commutative ring, and put
\[
 V_{k,R}=V_k^{\Z}\otimes_{\Z}R,
 \qquad
 W_{k,R}=W_k^{\Z}\otimes_{\Z}R.
\]
If $n\ge k-1$, then the map
$\varphi_{k,R}=\varphi_k^{\Z}\otimes_{\Z}R$ is diagonal in the bases induced
by $\{G_{k,d}\}$ and $\{w_{k,d}\}$.  In particular,
\begin{align}
 \ker\varphi_{k,R}
 &=\bigoplus_{|d|\le k-2}
   \operatorname{Ann}_R(C_{k-|d|-2})G_{k,d},
 \label{eq:ring-kernel}\\
 \operatorname{coker}\varphi_{k,R}
 &\cong\bigoplus_{t=0}^{k-2}
   \left(R/C_{k-t-2}R\right)^{\binom{n+t-1}{n-1}}.
 \label{eq:ring-cokernel}
\end{align}
Here $\operatorname{Ann}_R(c)=\{r\in R:cr=0\}$.
\end{corollary}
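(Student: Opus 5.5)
The plan is to reduce everything to the integral statement of Theorem~\ref{thm:basis} and Corollary~\ref{cor:kernel}, and then simply tensor with $R$.

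\emph{Step 1: the bases survive base change.} By Theorem~\ref{thm:basis}, $V_k^{\Z}$ is free on the $G_{k,d}$. By definition, $W_k^{\Z}$ is free on the $w_{k,d}$, and these are linearly independent over $\Z$ by Lemma~\ref{lem:W-basis}, applied for instance over $\mathbb{Q}$. Hence $V_{k,R}$ and $W_{k,R}$ are free $R$-modules with bases $G_{k,d}\otimes1$ and $w_{k,d}\otimes1$. By \eqref{eq:diagonal}, the integral map is $\varphi_k^{\Z}(G_{k,d})=C_{k-|d|-2}w_{k,d}$. Tensoring gives
\[
\varphi_{k,R}(G_{k,d}\otimes r)=C_{k-|d|-2}\,r\,(w_{k,d}\otimes1),
\]
so $\varphi_{k,R}$ is diagonal. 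This is the only place the hypothesis $n\ge k-1$ enters.

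\emph{Step 2: the kernel.} A general element is $\sum_d r_d\,G_{k,d}\otimes 1$, and it maps to $\sum_d C_{k-|d|-2}r_d\,w_{k,d}\otimes1$. By freeness of $W_{k,R}$ this vanishes exactly when $C_{k-|d|-2}r_d=0$ for every $d$. That condition gives \eqref{eq:ring-kernel}.

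\emph{Step 3: the cokernel.} Tensor product is right exact. The sequence
\[
V_k^{\Z}\to W_k^{\Z}\to\operatorname{coker}\varphi_k^{\Z}\to0
\]
therefore stays exact after $\otimes R$, so $\operatorname{coker}\varphi_{k,R}\cong\operatorname{coker}\varphi_k^{\Z}\otimes R$. Apply \eqref{eq:cokernel} together with $(\Z/c\Z)\otimes R\cong R/cR$. Alternatively, read the cokernel directly from the diagonal matrix as $\bigoplus_d R/C_{k-|d|-2}R$. Then count the indices $d$ with $|d|=t$: there are $\binom{n+t-1}{n-1}$ of them, which gives \eqref{eq:ring-cokernel}.

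The only point needing care is one of interpretation, not a real obstacle. The text's identification of $V_{k,R}$ with reduced polynomials over $R$ having vanishing punctured Hasse data is a remark and is not needed for the proof. The statement is phrased purely in terms of $\otimes_{\Z}R$, so freeness of the integral modules suffices throughout. Note also that no flatness of $R$ is required: the kernel in Step 2 is computed directly from the diagonal form, not by tensoring the integral kernel.
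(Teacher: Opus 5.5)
Your proposal is correct and matches the paper's proof, which tensors the diagonal matrix of Theorem~\ref{thm:basis} with $R$ and reads off the kernel and cokernel coordinate by coordinate. Your right-exactness route to the cokernel is an equivalent variant of that coordinatewise computation.
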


\begin{proof}
Tensor the diagonal matrix in Theorem~\ref{thm:basis} with $R$ and take the
kernel and cokernel coordinate by coordinate.
\end{proof}

For example, if $p$ is prime, $q\ge1$, and $R=\Z/p^q\Z$, then
\begin{equation}\label{eq:finite-ring-kernel}
 |\ker\varphi_{k,R}|=
 p^{\displaystyle
 \sum_{t=0}^{k-2}\binom{n+t-1}{n-1}
 \min\{q,v_p(C_{k-t-2})\}}.
\end{equation}
Thus the integral presentation records not only rank loss modulo $p$, but
also its depth modulo every power of $p$.

If $k_0$ is the least $k$ for which $C_{k-2}=0$ in $F$, then
\eqref{eq:nullity} gives $\dim\ker\varphi_{k_0}=1$.  In general, the
weight-$t$ summand belongs to the kernel exactly when
$C_{k-t-2}=0$ in $F$.

\section{A degree-compatible basis}\label{sec:degree}

The diagonal map detects the top layer.  To control all lower layers, we use
the block coordinates introduced in Section~\ref{sec:catalan}.  We first
record what they say in arbitrary dimension.

For $s\ge1$, define
\begin{equation}\label{eq:kappa-def}
 \kappa_{F,n}(s)=
 \min\left\{r+2(s-1-A):
 \begin{array}{c}
 0\le r\le n,\quad A=a_1+\cdots+a_r\le s-1,\\[2pt]
 a_i\ge1,\quad C_{a_i-1}\ne0\text{ in }F
 \end{array}\right\}.
\end{equation}
The empty tuple is permitted, so $\kappa_{F,n}(1)=0$.

\begin{proposition}[Individual degrees]\label{prop:individual-degree}
For every field $F$, every $n\ge1$, and every $|d|\le k-2$,
\begin{equation}\label{eq:degree-G-general}
 \deg G_{k,d}=n+2k-2-\kappa_{F,n}(k-|d|).
\end{equation}
Moreover,
\begin{equation}\label{eq:kappa-rho}
 \kappa_{F,n}(s)=\rho_F(s)\qquad\text{whenever }n\ge\rho_F(s).
\end{equation}
\end{proposition}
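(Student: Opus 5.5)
We read the degree of $G_{k,d}$ off its block expansion. The blocks $b_{q,\varepsilon}$ and their tensor products form a unitriangular basis with distinct leading monomials, so the degree of a polynomial is the largest degree of a block whose coefficient is nonzero in $F$. The plan is to show that $G_{k,d}=y^d\ghat_s$, with $s=k-|d|$, is already written in block coordinates, read off the surviving blocks, and maximize their degree.

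First we check that the expansion \eqref{eq:ghat} of $\ghat_s$ is a block expansion. For a term indexed by $S$ and $(a_i)_{i\in S}$, coordinate $i\in S$ carries $y_i^{a_i}=b_{a_i,0}(x_i)$ and coordinate $j\notin S$ carries $u_j=b_{0,1}(x_j)$. Multiplying by $y^d$ raises each $q$-index by $d_i$, giving $b_{a_i+d_i,0}$ or $b_{d_j,1}$. Distinct index data $(S,a)$ therefore give distinct blocks, and each has coefficient $\pm\prod_{i\in S}C_{a_i-1}$ with no cancellation. Such a block has degree
\[
 2|d|+2A+(n-|S|)=n+2k-2-\bigl(|S|+2(s-1-A)\bigr),\qquad A=\textstyle\sum_{i\in S}a_i .
\]
It survives over $F$ exactly when every $C_{a_i-1}\ne0$ in $F$. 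The constraints are $A\le s-1$ and $r=|S|\le n$; the empty set gives the value $2(s-1)$, which is always admissible. Taking the maximum degree over surviving blocks gives \eqref{eq:degree-G-general} with $\kappa_{F,n}(s)$ exactly as defined in \eqref{eq:kappa-def}. Since any choice of $r\le n$ distinct coordinates realizes a given tuple, there is no hidden constraint beyond $r\le n$.

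For \eqref{eq:kappa-rho}, the inequality $\kappa\le\rho$ is immediate. An optimal decomposition $s-1=a_1+\cdots+a_\rho$ has $A=s-1$ and $r=\rho\le n$, so it contributes the value $\rho$.

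For the reverse inequality, take any admissible tuple with $A\le s-1$ and set $m=s-1-A$. Append $m$ parts equal to $1$, which are admissible because $C_0=1$. This gives a decomposition of $s-1$ into $r+m$ admissible parts, so $\rho\le r+m\le r+2m$, which is the value $\kappa$ assigns to the original tuple. The appended parts may exceed $n$ in number, but this is harmless: $\rho_F$ carries no restriction on the number of parts, and the hypothesis $n\ge\rho$ is used only in the first direction.

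The main obstacle is the claim that the expansion has no cancellation. This rests on the unitriangularity of the basis: the tensor product blocks are linearly independent over any ring with distinct leading monomials. It also rests on the observation that multiplication by $y^d$ sends blocks to blocks injectively. Both follow from the remarks in Section~\ref{sec:catalan}. Part~3 of Theorem~\ref{thm:ghat} confirms the coefficient values independently.
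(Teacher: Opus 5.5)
Your proposal is correct and follows the paper's proof. You read $\deg G_{k,d}$ off the block expansion of $y^d\ghat_s$: distinct blocks have distinct leading monomials, and each block has degree $n+2k-2-(r+2(s-1-A))$. You then compare $\kappa_{F,n}$ with $\rho_F$ in both directions: appending $s-1-A$ parts equal to $1$ gives $\rho_F\le\kappa_{F,n}$, and an optimal decomposition with $r=\rho_F(s)\le n$ gives the reverse inequality.

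Your extra check that multiplication by $y^d$ sends blocks injectively to blocks, so the coefficients $\pm\prod C_{a_i-1}$ cannot cancel, only makes explicit what the paper leaves implicit. The closing appeal to Part~3 of Theorem~\ref{thm:ghat} is unnecessary, and it only covers blocks with $A=s-1$.
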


\begin{proof}
Put $t=|d|$ and $s=k-t$.  A block of $G_{k,d}$ indexed by a set $S$ of
size $r$ and admissible parts of total weight $A\le s-1$ has degree
\begin{equation}\label{eq:block-degree}
 2t+n+2A-r
 =n+2k-2-\bigl(r+2(s-1-A)\bigr).
\end{equation}
Conversely, every tuple in \eqref{eq:kappa-def} occurs by assigning its
parts to $r$ coordinates.  Since distinct blocks have distinct leading
monomials, the largest degree of a nonzero block is the degree of
$G_{k,d}$.  This proves \eqref{eq:degree-G-general}.

Given a tuple of weight $A$, append $s-1-A$ parts equal to $1$.  The result
is an admissible decomposition of $s-1$ with $r+s-1-A$ parts.  Therefore
\[
 \rho_F(s)\le r+s-1-A\le r+2(s-1-A),
\]
so $\rho_F(s)\le\kappa_{F,n}(s)$.  If $n\ge\rho_F(s)$, an optimal
decomposition in \eqref{eq:rho-def} is allowed in \eqref{eq:kappa-def}; it
has $A=s-1$ and cost $\rho_F(s)$.  This proves
\eqref{eq:kappa-rho}.
\end{proof}

\begin{theorem}[Degree echelon]\label{thm:degree-echelon}
Let $F$ be a field, let $k\ge2$, and let $n\ge k-1$.  For every nonzero
expansion
\[
 P=\sum_{|d|\le k-2}c_dG_{k,d}
\]
one has
\begin{equation}\label{eq:degree-echelon}
 \deg P=
 \max_{c_d\ne0}\bigl(n+2k-2-\rho_F(k-|d|)\bigr).
\end{equation}
\end{theorem}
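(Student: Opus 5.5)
Throughout, let $\rho_F(s)$ be as in \eqref{eq:rho-def}. Since $n\ge k-1\ge k-|d|-1\ge\rho_F(k-|d|)$, Proposition~\ref{prop:individual-degree} gives
\[
 \deg G_{k,d}=n+2k-2-\rho_F(k-|d|).
\]
This yields the upper bound $\deg P\le D$, where $D$ is the right side of \eqref{eq:degree-echelon}. For the lower bound I will work in the block basis of Section~\ref{sec:catalan}. Blocks have distinct leading monomials, so the degree of a polynomial equals the largest degree of a block with nonzero coordinate. It therefore suffices to exhibit one block of degree $D$ whose coordinate in $P$ is nonzero.

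First I record the block expansion of $G_{k,e}$. Multiplying $y^e$ by a block of $\ghat_{k-|e|}$ indexed by $(S,(b_i))$ gives, up to sign and the factor $\prod_{i\in S} C_{b_i-1}$, the single block
\[
 \prod_{i\in S}y_i^{e_i+b_i}\prod_{j\notin S}y_j^{e_j}u_j .
\]
By \eqref{eq:block-degree} its degree is $n+2k-2-\bigl(|S|+2(k-|e|-1-B)\bigr)$, where $B=\sum_{i\in S} b_i$. Let $r$ be the minimum of $\rho_F(k-|d|)$ over $d$ with $c_d\ne0$, so that $D=n+2k-2-r$. Every $G_{k,e}$ with $c_e\ne0$ contributes only blocks of degree at most $D$. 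A block of degree exactly $D$ can come from $e$ only if $\rho_F(k-|e|)=r$, $B=k-|e|-1$, $|S|=r$, and the $b_i$ form an optimal admissible decomposition.

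Next I choose the pivot. Among the $d$ with $c_d\ne0$ and $\rho_F(k-|d|)=r$, take one with $|d|$ \emph{maximal}, and fix an optimal decomposition $k-|d|-1=a_1+\cdots+a_r$ with every $C_{a_i-1}\ne0$. The key use of $n\ge k-1$ is the count $|\operatorname{supp}d|+r\le|d|+(k-|d|-1)=k-1\le n$. This lets me place the parts on a set $S$ of $r$ coordinates disjoint from $\operatorname{supp}d$. The pivot block is then $\beta=\prod_{i\in S}y_i^{a_i}\prod_{j\notin S}y_j^{d_j}u_j$, which has degree $D$.

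The main step is to show that only $G_{k,d}$ contributes to $\beta$. Suppose $G_{k,e}$ with $c_e\ne0$ contributes. The $u$-pattern of $\beta$ forces the set used by $e$ to be $S$, and it forces $e_j=d_j$ for $j\notin S$. For $i\in S$ we get $e_i=a_i-b_i\ge0$, with $e_i=0=d_i$ when $b_i=a_i$. Hence
\[
 |e|=|d|+\sum_{i\in S}(a_i-b_i)\ge|d|,
\]
with equality only when $e=d$. The degree-$D$ requirement forces $\rho_F(k-|e|)=r$, so by the maximality of $|d|$ we have $|e|\le|d|$. Therefore $e=d$ and $b=a$. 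Consequently the coordinate of $\beta$ in $P$ is $\pm c_d\prod_{i\in S}C_{a_i-1}\ne0$, which gives $\deg P\ge D$.

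The delicate point I expect is the bookkeeping that every contributing block product is a single block, since $y_j^{e_j}u_j$ and $y_i^{e_i+b_i}$ remain of the form $b_{q,\varepsilon}$; once that holds, the block coordinates can be read off directly.
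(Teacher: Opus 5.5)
Your proof is correct and follows essentially the same argument as the paper. You choose $d$ the same way (minimal $\rho_F$, then maximal $|d|$), use the same pivot block $y^d\prod_{i\in S}y_i^{a_i}\prod_{j\notin S}u_j$ placed off $\operatorname{supp}d$, and use the same $u$-pattern argument to force $e=d$. The only difference is cosmetic: you get $\rho_F(k-|e|)=r$ from the degree comparison $\deg G_{k,e}\ge D$, whereas the paper reads it off directly from the admissible parts $a_i-e_i$ summing to $k-|e|-1$.
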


\begin{proof}
For $s=k-|d|$ we have $\rho_F(s)\le s-1\le n$, so
Proposition~\ref{prop:individual-degree} gives
\begin{equation}\label{eq:degree-G}
 \deg G_{k,d}=n+2k-2-\rho_F(k-|d|).
\end{equation}
We must rule out cancellation among the leading blocks of distinct basis
elements.

Among the $d$ with $c_d\ne0$, choose one for which
$r=\rho_F(k-|d|)$ is smallest; among those, choose $d$ with $|d|$ largest.
Put $s=k-|d|$.  Since
\[
 n-|\operatorname{supp}d|\ge n-|d|\ge s-1\ge r,
\]
we may choose distinct coordinates
$I=\{i_1,\ldots,i_r\}$ on which $d$ vanishes and an optimal decomposition
$s-1=a_1+\cdots+a_r$.  Consider the pivot block
\begin{equation}\label{eq:pivot-block}
 \mathcal B_d=
 y^d\left(\prod_{\nu=1}^r y_{i_\nu}^{a_\nu}\right)
 \left(\prod_{j\notin I}u_j\right).
\end{equation}
Its coordinate in $G_{k,d}$ is the nonzero product
$\prod_\nu C_{a_\nu-1}$.

Suppose that $G_{k,e}$ also contributes to $\mathcal B_d$.  The block's
$u$-pattern first forces the subset used by $G_{k,e}$ to be exactly $I$.
It then forces $e_j=d_j$ for $j\notin I$.  Since $d_{i_\nu}=0$, the part
used at $i_\nu$ must be $a_\nu-e_{i_\nu}\ge1$.  Thus $|e|\ge|d|$, with
equality only when $e=d$.  These new parts sum to
\[
 s-1-(|e|-|d|)=k-|e|-1
\]
and have nonzero Catalan numbers.  Hence $\rho_F(k-|e|)\le r$.  If
$c_e\ne0$, the minimal choice of $r$ makes this an equality, and the maximal
choice of $|d|$ then forces $|e|=|d|$, hence $e=d$.

No other nonzero summand contributes to $\mathcal B_d$.  The pivot survives
and has degree $n+2k-2-r$.  Every nonzero summand has degree at most this
value by \eqref{eq:degree-G}, so the surviving degree is exactly the maximum
in \eqref{eq:degree-echelon}.
\end{proof}

The proof exhibits a block pivot for every basis element.  It is stronger
than linear independence: no linear combination can cancel the highest
possible layer of all its nonzero summands.  Thus the basis also describes
the degree filtration explicitly.

\begin{corollary}[Origin-order and degree filtrations]\label{cor:bifiltration}
For integers $0\le q\le k-1$ and $D$, let
\[
 V_k^{\ge q,\le D}
 =\{P\in V_k:\ord_{\zero}P\ge q,\ \deg P\le D\}.
\]
If $n\ge k-1$, then $V_k^{\ge q,\le D}$ has basis
\[
 \{G_{k,d}: |d|\ge q,\ 
       n+2k-2-\rho_F(k-|d|)\le D\}.
\]
Consequently,
\begin{equation}\label{eq:bifiltration-dimension}
 \dim V_k^{\ge q,\le D}
 =\sum_{\substack{q\le t\le k-2\\
          n+2k-2-\rho_F(k-t)\le D}}
   \binom{n+t-1}{n-1}.
\end{equation}
\end{corollary}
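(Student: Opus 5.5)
The plan is to show that the proposed set is contained in $V_k^{\ge q,\le D}$, that it is linearly independent, and that it spans. Containment and independence are immediate. Theorem~\ref{thm:basis} gives $\ord_{\zero}G_{k,d}=|d|$ by \eqref{eq:initial-G}, and \eqref{eq:degree-G} gives $\deg G_{k,d}=n+2k-2-\rho_F(k-|d|)$. So every $G_{k,d}$ in the proposed set lies in $V_k^{\ge q,\le D}$. Independence holds because these are members of the basis $\{G_{k,d}\}$ of $V_k$ from Theorem~\ref{thm:basis}.

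For spanning, I take an arbitrary nonzero $P\in V_k^{\ge q,\le D}$ and expand it uniquely as $P=\sum_d c_dG_{k,d}$, which is possible because the $G_{k,d}$ form a basis of $V_k$ over $F$. Two constraints then hold separately for each index.
\begin{itemize}
\item[(i)] By \eqref{eq:origin-order}, $\min\{|d|:c_d\ne0\}=\ord_{\zero}P\ge q$, so every $d$ with $c_d\ne0$ has $|d|\ge q$.
\item[(ii)] By Theorem~\ref{thm:degree-echelon}, $\deg P$ equals the maximum of $n+2k-2-\rho_F(k-|d|)$ over all $d$ with $c_d\ne0$. Since $\deg P\le D$, each such $d$ satisfies $n+2k-2-\rho_F(k-|d|)\le D$.
\end{itemize}
Hence $P$ lies in the span of the proposed set, which proves the basis statement.

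The step that needs care is that conditions (i) and (ii) hold simultaneously for the \emph{same} expansion. This works because both \eqref{eq:origin-order} and \eqref{eq:degree-echelon} are statements about all nonzero coefficients of one and the same basis expansion. A subspace cut out by two filtrations need not in general be spanned by basis vectors. Here it is, because this one basis is adapted to both filtrations at once: the initial-form result and the pivot-block argument of Theorem~\ref{thm:degree-echelon} each apply coefficientwise.

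For the dimension formula \eqref{eq:bifiltration-dimension}, I count the basis elements by grouping them according to $t=|d|$. The conditions $|d|\ge q$ and $n+2k-2-\rho_F(k-|d|)\le D$ depend only on $t$. The range $t\le k-2$ comes from the indexing of the basis. For each $t$ there are $\binom{n+t-1}{n-1}$ multi-indices in $\Z_{\ge0}^n$ of weight exactly $t$, which gives the stated sum.
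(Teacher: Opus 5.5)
Your proof is correct and takes the same approach as the paper, whose proof is the one-line observation that \eqref{eq:origin-order} and Theorem~\ref{thm:degree-echelon} each constrain every nonzero coefficient of the unique expansion in the basis $\{G_{k,d}\}$. You simply write that observation out as containment, independence and spanning, and then count the basis elements by weight $t=|d|$, with $\binom{n+t-1}{n-1}$ multi-indices for each $t$.
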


\begin{proof}
Equation \eqref{eq:origin-order} selects the indices with $|d|\ge q$,
and Theorem~\ref{thm:degree-echelon} selects those whose degree is at most
$D$.
\end{proof}

\section{The exact minimum degree}\label{sec:extremal}

\begin{proof}[Proof of Theorem~\ref{thm:extremal}]
First suppose that $0\le\ell\le k-2$.  Choose any
$d\in\Z_{\ge0}^n$ with $|d|=\ell$.  By Theorems~\ref{thm:basis}
and~\ref{thm:degree-echelon}, the polynomial $G_{k,d}$ has the required
vanishing and degree
\[
 n+2k-2-\rho_F(k-\ell).
\]
This proves the upper bound.

For the lower bound, let $Q$ satisfy the prescribed vanishing conditions.
Lemma~\ref{lem:reduction} gives $P\in V_k$ with $\deg P\le\deg Q$ and
$\ord_{\zero}(Q-P)\ge k-1$.  Since $\ell\le k-2$, we have
$\ord_{\zero}P=\ell$.  Expand $P=\sum_dc_dG_{k,d}$.  By
\eqref{eq:origin-order}, the least $|d|$ with $c_d\ne0$ is $\ell$.
Theorem~\ref{thm:degree-echelon} gives
\[
 \deg Q\ge\deg P\ge n+2k-2-\rho_F(k-\ell).
\]

Now let $\ell=k-1$.  The polynomial
\begin{equation}\label{eq:boundary-example}
 H=x_1^{k-1}(x_1-1)^k\prod_{i=2}^n(x_i-1)
\end{equation}
has degree $n+2k-2$, order $k-1$ at the origin, and order at least $k$ at
every point of $\pcube$.  This proves the upper bound for every $n\ge1$.

For the lower bound, suppose instead that a polynomial $Q$ with the required
vanishing has degree at most $n+2k-3$.  The stronger part of
Lemma~\ref{lem:reduction} gives $P\in U_k$ such that $Q-P$ has order at
least $k$ at every cube point.  Hence $P$ has order at least $k$ on
$\pcube$ and order exactly $k-1$ at the origin.  In particular, $P\ne0$.
But every coordinate of $P$ recorded by the injective map in
Lemma~\ref{lem:interpolation} is zero, a contradiction.  Therefore
$\deg Q\ge n+2k-2$.  Since $\rho_F(1)=0$, this is
\eqref{eq:main-formula}.
\end{proof}

\section{Catalan arithmetic}\label{sec:arithmetic}

We first treat odd primes.  The next digit criterion is a direct consequence
of Kummer's carry theorem; it is also contained in the general divisibility
results of Alter and Kubota \cite{AlterKubota}.

\begin{lemma}[Odd-prime criterion]\label{lem:odd-criterion}
Let $p=2h+1$ be an odd prime, and write a positive integer $a$ in base $p$ as
\[
 a=a_0+a_1p+\cdots+a_mp^m,
 \qquad 0\le a_i<p.
\]
Then
\begin{equation}\label{eq:odd-digit-criterion}
 p\nmid C_{a-1}
 \quad\Longleftrightarrow\quad
 a_0\le h+1\ \text{ and }\ a_i\le h\ (i\ge1).
\end{equation}
\end{lemma}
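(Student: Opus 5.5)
We use $C_{a-1}=\binom{2a-2}{a-1}/a=\binom{2a}{a}/\bigl(2(2a-1)\bigr)$, or more conveniently $C_{a-1}=\frac{1}{2a-1}\binom{2a-1}{a}$. The cleanest route is through the representation
\[
 C_{a-1}=\binom{2a-2}{a-1}-\binom{2a-2}{a},
\]
but divisibility is easier to read from a single binomial with a unit prefactor. So the plan is to write $C_{a-1}=\binom{2a}{a}\big/\bigl(2(2a-1)\bigr)$ and compare $v_p\binom{2a}{a}$ with $v_p(2a-1)$. By Kummer's theorem, $v_p\binom{2a}{a}$ equals the number of carries when adding $a+a$ in base $p$. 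Thus
\[
 v_p(C_{a-1})=\#\{\text{carries in }a+a\}-v_p(2a-1),
\]
and $p\nmid C_{a-1}$ holds exactly when these two quantities coincide. Since $C_{a-1}$ is an integer, the difference is always $\ge0$.

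First we count carries. A carry out of position $i$ occurs iff $2a_i+c_i\ge p$, where $c_i$ is the incoming carry. Let $i_0$ be the least position with $a_{i_0}\ge h+1$, that is, with $2a_{i_0}\ge p$. If no such position exists, there are no carries. Otherwise a carry begins at $i_0$.

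Next we compute $v_p(2a-1)$. The base-$p$ digits of $2a$ are $(2a_i+c_i)\bmod p$. Hence $v_p(2a-1)$ is positive iff the lowest digit of $2a$ equals $1$, that is, iff $2a_0\equiv1\pmod p$, i.e.\ $a_0=h+1$. In that case it equals $1$ plus the number of subsequent digits of $2a$ that equal $0$, before the first nonzero one.

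Now compare the two counts case by case.

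If $a_0\le h$, then $2a-1$ has units digit $2a_0-1\not\equiv0$ (for $a_0\ge1$). If $a_0=0$, then $2a-1$ ends in digit $p-1$ followed by borrows, but its units digit is still nonzero, so $v_p(2a-1)=0$. In this case $p\nmid C_{a-1}$ iff there are no carries at all, iff every $a_i\le h$. This matches \eqref{eq:odd-digit-criterion} when $a_0\le h$.

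If $a_0\ge h+2$, then $2a_0-1\ge p+2$, so the units digit of $2a-1$ is $2a_0-1-p\ne0$ and $v_p(2a-1)=0$. Since there is a carry at position $0$, $p\mid C_{a-1}$, as the criterion predicts.

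The case $a_0=h+1$ is the main obstacle, where the carry chain must exactly match the run of zeros. Here $2a_0=p+1$, so $2a-1$ has units digit $0$ with a carry into position $1$. Each subsequent digit of $2a-1$ is $(2a_i+1)\bmod p$, and it is $0$ iff $a_i=h$; in that situation the carry also propagates, since $2h+1=p$. So a maximal run $a_1=\cdots=a_j=h$ contributes exactly $j+1$ to $v_p(2a-1)$ and exactly $j+1$ carries, from positions $0$ through $j$. The next digit satisfies $a_{j+1}\ne h$. If $a_{j+1}\le h-1$, the chain stops and there is no carry out. If $a_{j+1}\ge h+1$, a carry out occurs that is not matched by a zero digit, which contributes an excess.

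After the chain stops, any later position with $a_i\ge h+1$ starts a new carry. Such carries add to the carry count but not to $v_p(2a-1)$, because the lowest nonzero digit of $2a-1$ has already been reached. Therefore the two counts agree iff every $a_i$ with $i\ge1$ is at most $h$. This matches \eqref{eq:odd-digit-criterion}, and the bookkeeping of carry chains versus trailing zeros is the step that needs care.
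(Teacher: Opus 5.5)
Your proof is correct, but it takes a different normalization from the paper's.

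\textbf{The two setups.} The paper writes $C_{a-1}=\binom{2a-2}{a-1}/a$. It applies Kummer's theorem to $(a-1)+(a-1)$ and subtracts $r=v_p(a)$. You write $C_{a-1}=\binom{2a}{a}/\bigl(2(2a-1)\bigr)$. You apply Kummer's theorem to $a+a$ and subtract $v_p(2a-1)$, using that $2$ is a unit for odd $p$.

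\textbf{What the paper's route gives.} The correction term can be read straight off the digits of $a$. The $r$ trailing zeros of $a$ become $r$ trailing digits $p-1$ in $a-1$, and these force exactly $r$ carries. When $r=0$, the allowance $a_0\le h+1$ (versus $a_i\le h$ for $i\ge1$) comes for free from the units digit $a_0-1$ of $a-1$.

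\textbf{What your route gives.} You work with the digits of $a$ itself, so there is no borrow to handle. The price is that the correction $v_p(2a-1)$ is nonzero exactly when $a_0=h+1$. In that case you must follow a data-dependent carry chain through digits equal to $h$ and match it against the trailing zeros of $2a-1$. That matching is where the extra $h+1$ allowance in the units digit comes from in your version.

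\textbf{Your chain analysis is complete.} A run $a_1=\cdots=a_j=h$ gives $j+1$ carries and $v_p(2a-1)=j+1$. The next digit $a_{j+1}\neq h$ always ends the run; if the run reaches the top digit, $a_{j+1}=0$ works because $h\ge1$. Any digit $\ge h+1$ in position $\ge1$ then produces a carry with no matching zero.

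\textbf{Comparison.} The two arguments are about the same length. The paper's bookkeeping is slightly simpler, because its cancellation case ($a_0=0$) involves a block of known digits rather than a run whose length depends on $a$.
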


\begin{proof}
Put $r=v_p(a)$.  Kummer's theorem identifies
$v_p\binom{2a-2}{a-1}$ with the number of carries in the base-$p$ addition
$(a-1)+(a-1)$, while
\[
 v_p(C_{a-1})=v_p\binom{2a-2}{a-1}-r.
\]
If $r=0$, the units digit of $a-1$ is $a_0-1$ and the higher digits are
$a_i$.  There are no carries precisely when
$2(a_0-1)<p$ and $2a_i<p$ for $i\ge1$, which is
\eqref{eq:odd-digit-criterion}.

If $r>0$, the first $r$ digits of $a-1$ are $p-1$; they force exactly $r$
carries.  The next digit is $a_r-1$ and receives an incoming carry.  No
additional carry occurs precisely when
$2(a_r-1)+1<p$ and $2a_i<p$ for $i>r$.  Since
$a_0=\cdots=a_{r-1}=0$, this is again
\eqref{eq:odd-digit-criterion}.
\end{proof}

\begin{proposition}[Odd-prime value of $\rho$]\label{prop:rho-odd}
For every odd prime $p$, one has $\rho_p(1)=0$, and for every $s\ge2$,
\[
 \rho_p(s)=
 \begin{cases}
  1,&p\nmid C_{s-2},\\
  2,&p\mid C_{s-2}.
 \end{cases}
\]
\end{proposition}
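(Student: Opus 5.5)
We need to show: for $s\ge2$, $\rho_p(s)=1$ iff $p\nmid C_{s-2}$, and otherwise $s-1$ is a sum of two admissible parts. Call $a\ge1$ admissible if $p\nmid C_{a-1}$, equivalently, by Lemma~\ref{lem:odd-criterion}, the base-$p$ digits of $a$ satisfy $a_0\le h+1$ and $a_i\le h$ for $i\ge1$, where $p=2h+1$. The case $s=1$ is the definition, since the empty sum is allowed and $\rho_p(1)=0$.

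For $s\ge2$, $\rho_p(s)\ge1$ always. The value $\rho_p(s)=1$ holds exactly when the single part $a_1=s-1$ is admissible, that is, when $p\nmid C_{s-2}$. So it remains to show that every positive integer $m=s-1$ is a sum of at most two admissible parts. When $m$ itself is inadmissible, this forces $\rho_p(s)=2$.

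The plan is a digitwise splitting. Write $m=\sum_i m_ip^i$ with $0\le m_i\le 2h$. For each $i\ge1$, split $m_i=b_i+c_i$ with $b_i,c_i\le h$; this is possible since $m_i\le 2h$. For the units digit, split $m_0=b_0+c_0$ with $b_0\le h+1$ and $c_0\le h$; this is also possible since $m_0\le 2h<2h+1$. No carries occur, so $b=\sum b_ip^i$ and $c=\sum c_ip^i$ are integers with $b+c=m$ whose digits satisfy the criterion of Lemma~\ref{lem:odd-criterion}.

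The obstacle is that parts must be $\ge1$, and one of $b,c$ might be $0$. I would handle this by making sure both are positive.
\begin{itemize}
\item If $m$ is admissible, there is nothing to do.
\item If $m$ is inadmissible, then some digit violates the criterion: either $m_0\ge h+2\ge2$, or some $m_i\ge h+1\ge2$ with $i\ge1$. In both cases that digit can be split into two positive pieces within the bounds, namely $m_0=(h+1)+(m_0-h-1)$ with $1\le m_0-h-1\le h-1$, or $m_i=h+(m_i-h)$ with $1\le m_i-h\le h$.
\end{itemize}
Then $b,c\ge1$, so $m=b+c$ is a decomposition into two admissible parts.

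Thus $\rho_p(s)\le2$ always, and $\rho_p(s)=2$ exactly when $m=s-1$ is inadmissible, i.e.\ when $p\mid C_{s-2}$. The only delicate point is this positivity check, together with verifying that the digit bounds $h+1$ for the units digit and $h$ for higher digits leave room for the split. Both reduce to the inequality $2h+1=p$.
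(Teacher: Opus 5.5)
Your proof is correct and follows essentially the same route as the paper's. You identify $\rho_p(s)=1$ with admissibility of $s-1$ via the digit criterion, split the base-$p$ digits of $s-1$ without carries into two admissible integers, and get positivity by splitting an offending digit into two positive pieces. Capping one units piece at $h$ instead of letting both be at most $h+1$ is only a cosmetic difference.
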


\begin{proof}
Put $N=s-1$ and $p=2h+1$.  If $C_{N-1}$ is nonzero modulo $p$, then
$N$ itself is an admissible part.  Otherwise, split every base-$p$ digit of
$N$ as a sum of two digits: for positions $i\ge1$, use two digits at most
$h$; in the units position, use two digits at most $h+1$.  No carries are
needed because every digit of $N$ is at most $2h$.  Lemma~\ref{lem:odd-criterion}
shows that the two resulting integers are admissible.  They can be chosen
positive: since $N$ is not admissible, either a higher digit exceeds $h$ or
the units digit exceeds $h+1$, and that offending digit may be split into
two positive summands.  Hence $\rho_p(s)=2$.
\end{proof}

Combining Theorem~\ref{thm:extremal} with
Proposition~\ref{prop:rho-odd} gives Corollary~\ref{cor:odd-intro}.
In particular, odd characteristic never lowers a minimum with
$\ell\le k-2$ by more than one degree.

For $p=2$, let $s_2(m)$ be binary digit sum.

\begin{proposition}[Binary value of $\rho$]\label{prop:rho-two}
For every $s\ge1$,
\[
 \rho_2(s)=s_2(s-1).
\]
\end{proposition}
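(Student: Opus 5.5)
First I characterize the admissible parts in characteristic $2$: I claim $C_{a-1}$ is odd exactly when $a$ is a power of $2$. The standard fact is that $C_j$ is odd if and only if $j=2^m-1$. To derive it from Kummer's theorem, as in Lemma~\ref{lem:odd-criterion}, I write
\[
 v_2(C_{a-1})=v_2\binom{2a-2}{a-1}-v_2(a),
\]
so that $v_2(C_{a-1})=s_2(a-1)-v_2(a)$. Here I use that the number of carries in $(a-1)+(a-1)$ in base $2$ equals $s_2(a-1)$.

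If $a=2^m$, then $s_2(a-1)=m=v_2(a)$, and the valuation is zero. Conversely, write $a=2^r b$ with $b$ odd. Then
\[
 s_2(a-1)=r+s_2(b-1)\ge r,
\]
with equality exactly when $b=1$. So the admissible parts are precisely the powers of $2$.

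The proposition then reduces to a statement about binary representations. For $s=1$ both sides are $0$, since $\rho_2(1)=0$ and $s_2(0)=0$. For $s\ge2$, set $N=s-1$. The upper bound $\rho_2(s)\le s_2(N)$ comes from writing $N$ as the sum of the distinct powers of $2$ in its binary expansion; each such power is admissible.

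For the lower bound, I use that any representation $N=2^{m_1}+\cdots+2^{m_r}$ satisfies $r\ge s_2(N)$. This follows from subadditivity, $s_2(x+y)\le s_2(x)+s_2(y)$, which holds because each carry reduces the digit sum by one. Applying it repeatedly gives
\[
 s_2(N)\le\sum_i s_2(2^{m_i})=r.
\]
Hence $\rho_2(s)=s_2(s-1)$.

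The only nonroutine step is the carry-count identity $v_2\binom{2x}{x}=s_2(x)$. Kummer's theorem gives this directly, since doubling $x$ in base $2$ produces exactly one carry per $1$-digit of $x$. Alternatively, I could cite the binary valuation result of Deutsch--Sagan \cite{DeutschSagan}.
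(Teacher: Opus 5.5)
Your proof is correct and follows the paper's route. You identify the admissible parts as the powers of $2$ through the $2$-adic valuation of Catalan numbers, take the binary expansion for the upper bound, and use that carries only lower digit sum for the lower bound; the only difference is that the paper uses Legendre's formula to get the closed form $v_2(C_m)=s_2(m+1)-1$, whereas you use Kummer's theorem to get the equivalent expression $s_2(a-1)-v_2(a)$.
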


\begin{proof}
For $s=1$, both sides are zero.  For $s\ge2$, Legendre's formula gives
\[
 v_2(C_m)
 =v_2((2m)!)-2v_2(m!)-v_2(m+1)
 =s_2(m+1)-1.
\]
Thus $C_{a-1}$ is odd precisely when $a$ is a power of $2$.
The binary expansion expresses $s-1$ as a sum of $s_2(s-1)$ powers of $2$.
Conversely, a sum of $r$ powers of $2$ has binary digit sum at most $r$,
because carrying can only decrease digit sum.
\end{proof}

Proposition~\ref{prop:rho-two} and Theorem~\ref{thm:extremal} give
Corollary~\ref{cor:two-intro}.  Unlike the odd-characteristic case,
the loss can exceed one degree.  For $0\le\ell\le k-2$, the drop from the
characteristic-zero value $n+2k-3$ is
\[
 s_2(k-\ell-1)-1.
\]

\section{Remarks and further directions}\label{sec:remarks}

\paragraph{Smaller dimension.}
Proposition~\ref{prop:individual-degree} determines the degree of every
individual basis element for all $n$, but below the stable range their
leading blocks can cancel.  The univariate problem already shows that the
hypothesis $n\ge k-1$ is substantive: for every field,
\begin{equation}\label{eq:univariate}
 \delta_F(1;k,\ell)=k+\ell,
\end{equation}
because a polynomial with order at least $k$ at $1$ is divisible by
$(x-1)^k$, its remaining factor must have order $\ell$ at $0$, and
$x^\ell(x-1)^k$ attains equality.
For example, over $\mathbb F_2$,
\[
 G_{3,0}+G_{3,(1)}=(x-1)^3.
\]
Both summands have degree $4$, but their sum has degree $3$.  Thus a
small-dimensional formula must account for cancellations between the
Catalan basis elements, not merely their individual degrees.

\paragraph{Larger grids.}
Huang, Wang, and Wang extend the characteristic-zero theorem from the
Boolean cube to $\{0,1,\ldots,m\}^n$ \cite{HuangWangWang}.  Their proof uses
Lagrange inversion and a more general top-degree space.  It would be
interesting to identify an analogue of the Catalan basis $G_{k,d}$ there.
Such a basis might reveal the relevant modular coefficients and determine
positive-characteristic drops for $mB^n$.

\paragraph{Products of linear forms.}
The polynomial minimum gives a lower bound for an almost $k$-cover by
hyperplanes, but the extremal polynomials constructed here need not split
into linear factors.  The present argument therefore does not determine the
corresponding geometric minimum in positive characteristic; in particular,
it does not decide whether the first polynomial drop can be realized by a
product of linear forms.

\end{document}